\def\d{\delta}
\def\r{\rho}
\def\om{\omega}
\def\a{\alpha}
\def\b{\beta}
\def\gm{\gamma}
\def\Gm{\Gamma}
\def\A{\mathbf{A}}
\def\g{\mathbf{g}}
\def\P{\mathbf{P}}
\def\u{\mathbf{u}}
\def\RR{\mathbb{R}}
\def\E{\mathbb{E}}
\def\P{\mathbb{P}}
\def\ZZ{\mathcal{Z}}
\def\X{\mathcal{X}}
\def\VV{\mathcal{V}}
\newcommand{\support}{\mathcal S}
\newcommand{\suppest}{\widetilde{\mathcal S}}
\newcommand{\obs}{\mathbf y}
\newcommand{\x}{\mathbf x}
\newcommand{\z}{\mathbf z}
\newcommand{\sol}{\widehat{\x}}
\newcommand{\w}{\boldsymbol{\omega}}
\newcommand{\noise}{\mathbf e}
\newcommand{\NN}{\mathcal N}
\newcommand{\bigO}{\mathcal O}
\newcommand*{\bfrac}[2]{\genfrac{}{}{0pt}{}{#1}{#2}}
\newtheorem{theorem}{Theorem}[section]
\newtheorem{proposition}[theorem]{Proposition}
\newtheorem{definition}[theorem]{Definition}
\newtheorem{lemma}[theorem]{Lemma}
\title{The sample complexity of weighted sparse approximation}
\author{Bubacarr Bah, and Rachel Ward \thanks{Mathematics Department, and Institute of Computational Engineering and Sciences (ICES), University of Texas at Austin ({\tt bah@math.utexas.edu, rward@math.utexas.edu}).}}
\begin{document}

\maketitle

\pagestyle{myheadings}
\thispagestyle{empty}
\markboth{BUBACARR BAH, AND RACHEL WARD}{The sample complexity of weighted sparse approximation}

\pagestyle{myheadings}

\vspace{-2mm}\begin{abstract}
For Gaussian sampling matrices, we provide bounds on the minimal number of measurements $m$ required to achieve robust weighted sparse recovery guarantees in terms of how well a given prior model for the sparsity support aligns with the true underlying support.  Our main contribution is that for a sparse vector ${\bf x} \in \mathbb{R}^N$ supported on an unknown set $\mathcal{S} \subset \{1, \dots, N\}$ with $|\mathcal{S}|\leq k$, if $\mathcal{S}$ has \emph{weighted cardinality} $\omega(\mathcal{S}) := \sum_{j \in \mathcal{S}} \omega_j^2$, and if the weights on $\mathcal{S}^c$ exhibit mild growth, $\mathop{\omega_j^2 \geq  \gamma \log(j/\omega(\mathcal{S}))}$ for $j\in\mathcal{S}^c$ and $\gamma > 0$, then the sample complexity for sparse recovery via weighted $\ell_1$-minimization using weights $\omega_j$ is linear in the weighted sparsity level, $\mathop{m = \mathcal{O}(\omega(\mathcal{S})/\gamma)}$. This main result is a generalization of special cases including a) the standard sparse recovery setting where all weights $\omega_j \equiv 1$, and $m = \mathcal{O}\left(k\log\left(N/k\right)\right)$; b) the setting where the support is known a priori, and $m = \mathcal{O}(k)$; and c) the setting of sparse recovery with prior information, and $m$ depends on how well the weights are aligned with the support set $\mathcal{S}$. We further extend the results in case c) to the setting of additive noise. Our results are {\em nonuniform} that is they apply for a fixed support, unknown a priori, and the weights on $\mathcal{S}$ do not all have to be smaller than the weights on $\mathcal{S}^c$ for our recovery results to hold.
\end{abstract}

\smallskip
\noindent \textbf{Keywords.} weighted sparsity, sample complexity, compressive sensing, Gaussian width, weighted $\ell_1$-minimization

\section{Introduction} \label{sec:intro}

Consider the scenario where we would like to acquire an unknown vector $\x \in \mathbb{R}^N$ with assumed sparse or compressible representation from $m \ll N$ linear measurements of the form $y_i = \langle {\bf a}_i, \x \rangle$.   We are also interested in the noisy setting where instead $y_i = \langle {\bf a}_i, \x \rangle + e_i$ where $e_i$ is the $i^{\mbox{th}}$ component of a noise vector $\noise$ with noise level $\| \noise \|_2 \leq \eta$.   Recall that $\x \in \mathbb{R}^N$ is said to be $k$-sparse if it is supported on a subset $\support \subset \{1,2, \dots, N\}$ of cardinality $| \support | \leq k$.   It is well known by now that for an unknown $k$-sparse vector $\x$, given a measurement matrix $\A \in \RR^{m \times N}$, $m = \bigO( k \log(N/k))$ noisy random linear measurements $\obs = \A \x + \noise$ suffice to guarantee stable and robust reconstruction of $\x$ via the following $\ell_1$-minimization problem:
\begin{equation}
\label{eqn:l1min}\tag{L1}
\x^{\#} := \text{arg}\min_{\z\in\RR^N} ~\|\z\|_{1} \quad \mbox{subject to} \quad \| \A\z - \obs \|_2 \leq \eta\,,\vspace{-1mm}
\end{equation}
with the solution having an approximation error of the form $\| \x^{\#} - \x \|_2 \leq C_1 \| \x - \x_k \|_2 + C_2 \eta$, where $\x_k$ is the best $k$-term approximation to $\x$.  In fact, the number of measurements $\mathop{m \geq C k \log(N/k)}$ is order-optimal -- given no additional information about the support of $\x$, no algorithm can provide a better worst-case recovery guarantee using fewer measurements.  This forms the basis of the field of compressive sensing  \cite{donoho2006compressed,candes2006stable,foucart2013mathematical}.

In many practical problems of interest, one might have additional prior information about which indices within the full index set $[N] = \{1,2, \dots, N\}$ are more likely to appear in the best $k$-term approximation.   So in the noisy setting above, given a prior distribution of weights $\om_j > 0$ associated to the indices, a natural choice of reconstruction algorithm is weighted $\ell_1$-minimization:
\begin{equation}
\label{eqn:wl1min}\tag{WL1}
\sol := \text{arg}\min_{\z\in\RR^N} ~\|\z\|_{\om,1} \quad \mbox{subject to} \quad \|\A\z - \obs\|_2 \leq \eta\,,\vspace{-1mm}
\end{equation}
where $\|\z\|_{\om,1} = \sum_{j=1}^N \om_j|z_j|$ with larger-magnitude weights (or, perhaps more precisely,  larger-magnitude ``penalties") $\om_j > 0$ are assigned to indices believed less likely to contribute to the support of $\x$.  In case the prior distribution of weights is in fact aligned with the sparsity structure of $\x$, it is natural to ask whether weighted $\ell_1$-minimization might have reduced sample complexity compared to unweighted $\ell_1$-minimization, \eqref{eqn:l1min}.  This has been the subject of several works in the area of compressive sensing, see for example \cite{von2007compressed,candes2008enhancing,khajehnejad2009weighted,vaswani2010modified,jacques2010short,xu2010breaking,mansour2011weighted,friedlander2012recovering,oymak2012recovery,mansour2012support,rauhut2015interpolation,mansour2014recovery}. 

Existing results show better sparse reconstruction results for weighted $\ell_1$-minimization than the standard unweighted $\ell_1$-minimization, which is usually demonstrated in the form of weaker recovery conditions that guarantee recovery with reduced sample complexities \cite{khajehnejad2009weighted,rauhut2015interpolation,mansour2014recovery}.  
 In the extreme case, one might know the support of $\x$ exactly; say, for example, $\support = \{1,2, \dots, k\}$.   In the noiseless setting, $\x$ can then be exactly recovered from $\obs$ using e.g. $m = k$ Gaussian random measurements, with probability $1$, the measurements $\obs = \A \x = \A_{\support} \x_{\support}$ will be invertible. 
In the middle ground between knowing the support set a priori and knowing only that the support is some subset of $[N]$ of cardinality $k$, what is the sample complexity of sparse recovery?   Clearly the answer lies between the extremes of $m = \bigO(k)$ and $m = \bigO( k \log(N/k))$.  The purpose of this paper is to make the sample complexity precise, in terms of the distribution of the weights and how well this distribution aligns with the true underlying support of the signals of interest.

There are several motivating examples in which one naturally is faced with a sparse recovery problem and a prior distribution over the support.
For example, the smoothness of a function corresponds to the rapid decay of its Fourier series, so lower-order frequency basis functions are more likely to contribute to the best $s$-term approximation, hence making smoothness a structured sparsity constraint. It has been demonstrated in \cite{rauhut2015interpolation} that such structured sparse expansions are imposed by weighted $\ell_p$ coefficient spaces in the range $0<p\leq 1$, and that weighted $\ell_1$-minimization can be used as a convex surrogate for weighted $\ell_p$-minimization with $p<1$.

Alternatively, the recursive recovery of time sequences of sparse signals when their supports evolves slowly over time \cite{jacques2010short} is another setting with prior information about the support. In this case the support from the previous time can be used as a support estimate in the current time. This can be applied to real-time dynamic MRI reconstruction, real-time single-pixel camera video imaging.
More  applications  where sparse  reconstruction  for  time  sequences  of  signals/images may be needed can be found in\cite{vaswani2010modified}. 

Similarly in medical imaging, natural images have tree-structured wavelet expansions and are less sparse on lower-order wavelets. Precisely, wavelet functions act as local discontinuity detectors, and a signal discontinuity will give rise to a chain of large wavelet coefficients along a branch of the wavelet tree from a leaf to the root. Leading to tree-sparsity where indices in a support form a rooted-connected tree. For such wavelet-tree sparsity it is possible to make choices of weights in accordance with the sparsity distribution. This motivated the development of the notion of {\em weighted sparsity} in \cite{rauhut2015interpolation} which takes into account prior information on the likelihood that any particular index is part of the wavelet tree-sparse support. 
 
With regards to related works, to the best of our knowledge, performance guarantees for weighted $\ell_1$ minimization in the compressive sensing set-up were independently introduced by the works of von Borries et al. \cite{von2007compressed}, Vaswani and Lu \cite{vaswani2010modified}, and Khajehnejad et al. \cite{khajehnejad2009weighted}.
The authors of \cite{von2007compressed} showed that exploiting available support information leads to reduced sampling rates for signals with a sparse discrete Fourier transform (DFT); while the authors of \cite{vaswani2010modified} also used weighted $\ell_1$-minimization with zero weights on the support estimate and derived, weaker than $\ell_1$-minimization, sufficient recovery conditions where a large proportion of the support is known. 
This work was extended to the noisy setting
 and to compressible signals by \cite{jacques2010short}.
Similarly the authors of \cite{khajehnejad2009weighted} proposed the weighted $\ell_1$-minimization for the sparse recovery problem with known support and analyzed the performance of this method using a Grassmann angle approach. 
An improvement of their result, in the form of a closed form expression for the minimum number of measurements, was give in \cite{oymak2012recovery}, while  \cite{friedlander2012recovering} further investigated the stability and robustness of this approach. 
The authors of \cite{mansour2011weighted} extended the analysis of \cite{khajehnejad2009weighted} to multiple sets.  

From a different angle, attempts to analyze the reweighted $\ell_1$ algorithm introduced in \cite{candes2008enhancing} also generated interest in weighted $\ell_1$-minimization in the compressive sensing community. The authors of \cite{xu2010breaking} attempted to give some rigorous guarantees of improvements for the reweighted $\ell_1$-minimization over standard $\ell_1$-minimization, which was lacking in \cite{candes2008enhancing}, by analyzing a two-step reweighted $\ell_1$-minimization scheme.  Further improvement on the analysis and guarantees of the two-step reweighted $\ell_1$-minimization was given in \cite{khajehnejad2010improved}; while a variant of the reweighted $\ell_1$-minimization algorithm was proposed in \cite{mansour2012support}. 

Until recently, theoretical guarantees for weighted $\ell_1$-minimization largely relied on analysis tools of restricted isometry property and null space property that are tailored to the analysis of standard $\ell_1$-minimization without weights.  As such, the recovery guarantees for weighted $\ell_1$-minimization have until recently shown only that weighted $\ell_1$-minimization cannot do worse than standard $\ell_1$-minimization in cases where the weights are aligned with the sparsity structure.  
Recently, the paper \cite{rauhut2015interpolation}, focusing on function interpolation, proved that weighted $\ell_1$-minimization can provide sample complexity $m \geq C k \text{log}(N/k)$ for certain classes of structured random matrices where $\ell_1$-minimization cannot, namely, matrices arising from orthonormal systems which are not uniformly bounded in the $L_{\infty}$ norm.  To achieve these results, they associate to a set of weights $\{ \om_j \}_{j \in[N]}$ with $\om_j \geq 1$   the class of what they called \emph{weighted} $s$-sparse signals: the weighted sparsity of a vector $\x \in \mathbb{R}^N$ is given by $\| \x \|_{\omega,0} = \sum_{\{j\in [N] ~:~ x_j \neq 0\}} \om_j^2$. %
Since $\om_j \geq 1$, this denotes a smaller class of signals than the class of $k$-sparse signals.   Along with this model class, they introduced a certain \emph{weighted} null space property to analyze the performance of weighted $\ell_1$-minimization which, in turn, is strictly weaker than the standard null space property required for $\ell_1$-minimization.
Still, they did not prove that the tools of weighted sparsity and weighted null space property can be used to derive sparse recovery guarantees that beat the $m \geq C k \text{log}(N/k)$ complexity of standard $\ell_1$-minimization.
A particularly interesting choice of weights suggested in \cite{rauhut2015interpolation} and also used in this work is the polynomially growing weights, i.e. $\om_j \leq j^\a$ for $\a \in (0,1/2)$. This model of weights relates to the applications of wavelet sparsity and imaging, where essentially one would use polynomially growing weights in light of the sparsity distribution. Other works related to the above include \cite{peng2014weighted,bouchot2015compressed}.
 
 Later, Mansour and Saab in \cite{mansour2014recovery} used a closely related weighted null space property and analyzed the performance of weighted $\ell_1$-minimization in the \emph{two weights} regime: $\om_j = w_1 \geq 0$ on a subset $\suppest \subset [N]$, and $\om_j = w_2 = 1 \geq w_1$ on the complement, $\suppest^c$. Note that this weighting scheme is equivalent to $\om_j =w_1 \geq 1$ on $\suppest$, and $\om_j = w_2 \geq w_1$ on $\suppest^c$, as rescaling the weighted $\ell_1$-minimization objective \eqref{eqn:wl1min} does not change the set of minimizers, if the scaling is global on the signal components, i.e., if $w_2/w_1$ is constant. For a similar reason, without loss of generality one may assume $w_1 = 1$. Observe that the set $\suppest$ can be interpreted as the set of indices which one has prior reason to believe the signal will be localized, and the relative proportion $w_2/w_1$ on $\suppest^c$ corresponds to a confidence in this prior belief.  Through the weighted null space property, they provided in the noiseless setting, $\eta = 0$, a reduced sampling complexity for a fixed $k$-sparse signal $\x$ supported on a set $\support$ of $m \geq C\left(\rho^2 k + n\right)\log(N/k)$ for $n=|\{ \support^c \cap \suppest \} \cup \{ \support \cap {\suppest^c} \}| \leq k$ and $\rho = w_1/w_2 \in [0,1]$ such that 
as the estimated support $\suppest$ converges towards the true support $\support,$ and as the confidence grows as $\rho \rightarrow 0$, the sample complexity improves. Their results hold for a fixed support $\support$ hence it is referred to as {\em nonuniform}, but they showed that {\em uniform} (holding $\forall \support$) results will require the ``baseline'' $m \geq C k \text{log}(N/k)$ but this bound grows as a function of the error in the support estimate as opposed to the sparsity. For more details, see Section \ref{sec:sample}.

\subsection{Contribution of this work}\label{sec:mresults}
 
The purpose of this work is to provide a unified framework for the sample complexity weighted sparse recovery via weighted $\ell_1$-minimization, using the tools of weighted sparsity and weighted null space property.  What is perhaps surprising is that we find that as long as the weights in the weighted $\ell_1$-minimization program grow mildly, namely, if for some $\gm > 0$ there is an ordering of the weights satisfying $\om_{j} \geq \max\{1, \sqrt{2\gm\log(j/s)} \}$, and as long as the underlying signal is supported on a set $\support \subset [N]$ of \emph{weighted  cardinality} $\om(\support) := \sum_{j \in \support} \om_j^2 \leq s$, then a number of linear Gaussian measurements $m = \bigO({s}/{\gm})$ suffices for robust recovery of the underlying signal. We show that special cases of this main result range from: 
\begin{itemize}
 \item[a)] the standard sparse recovery setting where all weights $\om_j \equiv 1$, the unweighted sparsity $k$ equals the weighted sparsity $s$, and $m = \bigO\left(k \text{log}(N/k)\right)$
 \item[b)] the setting where the support is known a priori and $m = \bigO(k)$ Gaussian measurements suffice
 \item[c)] the setting of sparse recovery with prior information and using two weights, where $m$ depends on how well the weights are aligned with the support set $\support$; hence we re-derived the results from  \cite{mansour2014recovery}, but also extended these results to the setting of additive noise 
\end{itemize}
Similar to \cite{mansour2014recovery} our results are {\em nonuniform} that is for a fixed support (which we do not need to know a priori), and the weights on $\support$ do not all have to be smaller than the weights on $\support^c$ for our recovery results to hold.

Underlying our result is a computation for the Gaussian mean width of classes of weighted sparse signals.
The Gaussian mean width is a notion of the complexity of a set and is used to estimate the sample complexity for reconstruction with Gaussian matrices. Thus we use it to derive sample complexity bounds for weighted $\ell_1$-minimization over a set of $s$-weighted sparse signals, which constitutes a subset of the set of unweighted $s$-sparse signals and thus has smaller Gaussian width. This gives a quantitative upper bound (as stated in Theorem \ref{thm:main}) on the sample complexity which depends on how large the weights are outside of the support set of $\x$, and ranges from $k$ to $k\log(N/k)$ depending on the choice of weights.

\section{Preliminaries} \label{sec:prelim}

\subsection{Notation \& definitions}
Scalars are denoted by lowercase letters (e.g. $k$), vectors by lowercase boldface letter (e.g., ${\bf x}$), sets by uppercase calligraphic letters (e.g., $\mathcal{S}$) and matrices by uppercase boldface letter (e.g. ${\bf A}$).
The cardinality of a set $\mathcal{S}$ is denoted by $|\mathcal{S}|$ and $[N] := \{1, \ldots, N\}$.
Given $\mathcal{S} \subseteq [N]$, its complement is denoted by $\mathcal{S}^c := [N] \setminus \mathcal{S}$ and $\x_\mathcal{S}$ is the restriction of $\x \in \RR^N$ onto $\mathcal{S}$, i.e.~ $(\x_\mathcal{S})_i = \x_i$ if $i \in \mathcal{S}$ and $0$ otherwise. 

The $\ell_p$-norm of a vector ${\bf x} \in \RR^N$ is defined as $\|{\bf x}\|_p := \left ( \sum_{i=1}^N x_i^p \right )^{1/p}$; while
the weighted $\ell_1$-norm is $\|\x\|_{\om,1}:= \sum_{i=1}^N \om_i|x_i|$.

As in the works of \cite{candes2008enhancing,mansour2012support,rauhut2015interpolation}, we consider the setting of weighted $\ell_1$-minimization with a fixed weight $\om_j > 0$ corresponding to each index $j \in [N]$.   Corresponding to such a set of weights, the paper \cite{rauhut2015interpolation} introduced the notion of weighted cardinality of a subset ${\cal S} \subset [N]$ as a generalization of the standard cardinality:
\begin{definition}
\label{def:swsparsity}
Fix $s \in \mathbb{N}$ and a set of weights $\om_j \geq 1$.  A subset ${\cal S} \subset [N]$ is said to have weighted cardinality $s$ if 
\begin{equation}
 \label{eqn:swsparsity}
  \om(\support) := \sum_{j\in\support} \om_j^2 \leq s.
 \end{equation} 
 \end{definition}
Correspondingly, the weighted sparsity of a signal $\x\in \RR^N$, denoted by $\| \x \|_{\om,0} = \sum_{j:\{x_j \neq 0\}} \om_j^2$. Note that for constant weights $\om_j \equiv 1$, the weighted cardinality of a subset coincides with the standard cardinality; while the weighted sparsity of a signal coincides with the standard sparsity.  Otherwise, the set of subsets of $[N]$ of weighted cardinality $s$ constitutes a strict subset of those subsets of cardinality $s$.  In turn, the set of $s$-weighted sparse signals, or those $\x \in \RR^N$ supported on a subset ${\cal S}$ of weighted cardinality $\om(\support) \leq s$, denotes a strict subset of $s$-sparse signals.

\section{Weighted sparse recovery with noisy Gaussian measurements} \label{sec:robustness}

Our main results on the sample complexity of weighted sparse approximation using Gaussian matrices is based on a calculation for the Gaussian mean width of the set of weighted sparse signals.  

In the ensuing analysis we will use the following nonuniform version of the uniform weighted robust null space property ($\w$-RNSP) adapted from Definition 4.1 in \cite{rauhut2015interpolation}.
 \begin{definition}[Nonuniform $\w$-RNSP, \cite{rauhut2015interpolation}]
\label{def:nuwnspgeneral}
Given a weight vector $\w$, a matrix $\A\in\RR^{m\times N}$ is said to satisfy the $\w$-RNSP of order $s$ relative to a fixed set , $\support\subset[N]$, with $\om(\support)\leq s$ and parameters $\r \in (0,1]$ and $\tau > 0$ if 
\begin{equation}
\label{eqn:nuwnspgeneral}
\|\z_{\support}\|_{\om,1} \leq \r\|\z_{\support^c}\|_{\om,1} + \tau\|\A\z\|_2, ~\mbox{ for all } ~\z\in\RR^N.
\end{equation}
\end{definition}
The standard RNSP where the $\ell_{\om,1}$ is replaced by the $\ell_{1}$ in \eqref{eqn:nuwnspgeneral} above is a sufficient condition for robust sparse recovery with $\ell_1$-minimization \eqref{eqn:l1min}, see \cite{foucart2013mathematical} and references therein. Similarly, $\w$-RNSP is also a sufficient condition for robust weighted $\ell_1$-minimization \eqref{eqn:wl1min}, see \cite{rauhut2015interpolation,mansour2014recovery}.

Our derivation relies on the work of Chandrasekaran et al. \cite{chandrasekaran2012convex}, where sufficient conditions for robust recovery are provided based on the tangent cone of a convex function at the signal to be recovered, see also \cite{tropp2014convex}. The tangent cone  (also known as the descent cone) of the $\ell_{\om,1}$-norm is defined as follows. 
\begin{equation*}
\label{eqn:tgntcone}
\mathcal{T}_{\om}(\x) := \mbox{cone}\{\mathbf{u}-\x ~: \mathbf{u}\in\RR^N, ~\|\mathbf{u}\|_{\om,1} \leq \|\x\|_{\om,1} \}\,,
\end{equation*}
where ``cone'' refers to the conic hull of the indicated set. The set $\mathcal{T}_{\om}(\x)$ consists of the directions from $\x$, which do not increase the value of $\|\x\|_{\om,1}$.

However, we use the approach used in \cite{mansour2014recovery} where instead of using the tangent cone defined above, we use the convex cone of the following set, which contains the set of vectors that violate the $\w$-RNSP.
\begin{equation}
\label{eqn:uwsnsppgaussfailsetws}
\ZZ_{\om,\support} := \left\{\z\in\RR^N ~: ~ \|\z_{\support}\|_{\om,1} \geq \|\z_{\support^c}\|_{\om,1}\right\}\,,
\end{equation}
for $\support\subset [N] ~\mbox{with} ~\om(\support) \leq s$.

Using this set, we now provide a simple lemma concerning the robust recovery of weighted sparse signals. 
\begin{lemma}
\label{lem:nurws}
Consider a vector of weights $\w \geq 1$, and a signal $\x\in\RR^N$ with an unknown support set $\support\subset [N]$ of weighted cardinality $\om(\support) \leq s$.  Let  $\A\in\RR^{m\times N}$ be a measurement matrix, and assume that noisy measurements are taken,  $\obs = \A\x + \noise$ with $\|\noise\|_2\leq \eta$. Fix $\zeta = \zeta(\om,s) >0$ such that
\begin{equation}
\label{eqn:nugetmrws}
\inf_{\z\in\ZZ_{\om,\support},\|\z\|_2 = 1} \|\A\z\|_2 \geq \zeta\,.
\end{equation}
Then any solution $\sol$ of \eqref{eqn:wl1min} satisfies
\begin{align}
\label{eqn:errorbound}
\|\sol - \x\|_2 \leq 2\eta/\zeta\,, \quad \mbox{ and } \quad \|\sol - \x\|_{\omega,1} \leq 4 \eta \sqrt{s} /\zeta\,.
\end{align}
\end{lemma}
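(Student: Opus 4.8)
The plan is to follow the standard "descent cone" argument of Chandrasekaran et al.\ \cite{chandrasekaran2012convex}, adapted to the weighted setting and to the enlarged cone $\ZZ_{\om,\support}$ as in \cite{mansour2014recovery}. First I would set $\mathbf{h} := \sol - \x$ and observe two things about $\mathbf{h}$. On the one hand, both $\sol$ and $\x$ are feasible for \eqref{eqn:wl1min}: $\|\A\x - \obs\|_2 = \|\noise\|_2 \leq \eta$ by assumption, and $\|\A\sol - \obs\|_2 \leq \eta$ since $\sol$ is a minimizer. By the triangle inequality this gives $\|\A\mathbf{h}\|_2 \leq 2\eta$. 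On the other hand, since $\sol$ is the minimizer of the weighted $\ell_1$-norm among feasible points and $\x$ is feasible, $\|\sol\|_{\om,1} \leq \|\x\|_{\om,1} = \|\x_{\support}\|_{\om,1}$ (using $\x$ is supported on $\support$).

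Next I would use this optimality inequality to show $\mathbf{h} \in \ZZ_{\om,\support}$. Split $\sol = \x + \mathbf{h}$ coordinatewise on $\support$ and $\support^c$: using $\|\x_{\support} + \mathbf{h}_{\support}\|_{\om,1} \geq \|\x_{\support}\|_{\om,1} - \|\mathbf{h}_{\support}\|_{\om,1}$ and $\|\mathbf{h}_{\support^c}\|_{\om,1} = \|(\sol)_{\support^c}\|_{\om,1}$ (since $\x_{\support^c} = 0$), the inequality $\|\sol\|_{\om,1} \leq \|\x_{\support}\|_{\om,1}$ rearranges to $\|\mathbf{h}_{\support^c}\|_{\om,1} \leq \|\mathbf{h}_{\support}\|_{\om,1}$, i.e.\ $\mathbf{h} \in \ZZ_{\om,\support}$. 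Now apply the hypothesis \eqref{eqn:nugetmrws}: if $\mathbf{h} \neq 0$, then $\mathbf{h}/\|\mathbf{h}\|_2 \in \ZZ_{\om,\support}$ has unit norm (note $\ZZ_{\om,\support}$ is a cone, so it is closed under positive scaling), hence $\|\A\mathbf{h}\|_2 \geq \zeta \|\mathbf{h}\|_2$. Combining with $\|\A\mathbf{h}\|_2 \leq 2\eta$ yields $\|\mathbf{h}\|_2 \leq 2\eta/\zeta$, which is the first bound in \eqref{eqn:errorbound} (and it trivially holds if $\mathbf{h} = 0$).

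For the second bound, I would bound $\|\mathbf{h}\|_{\om,1} = \|\mathbf{h}_{\support}\|_{\om,1} + \|\mathbf{h}_{\support^c}\|_{\om,1} \leq 2\|\mathbf{h}_{\support}\|_{\om,1}$ using the cone membership just established. Then I would apply Cauchy--Schwarz on the support: $\|\mathbf{h}_{\support}\|_{\om,1} = \sum_{j \in \support} \om_j |h_j| \leq \big(\sum_{j \in \support} \om_j^2\big)^{1/2} \big(\sum_{j \in \support} h_j^2\big)^{1/2} \leq \sqrt{s}\,\|\mathbf{h}\|_2$, where the last step uses $\om(\support) \leq s$ from Definition \ref{def:swsparsity}. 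Chaining these gives $\|\mathbf{h}\|_{\om,1} \leq 2\sqrt{s}\,\|\mathbf{h}\|_2 \leq 2\sqrt{s}\cdot 2\eta/\zeta = 4\eta\sqrt{s}/\zeta$, as claimed.

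I do not anticipate a serious obstacle here; this lemma is essentially bookkeeping. The only point requiring a little care is the direction of the optimality-to-cone-membership deduction — making sure the weighted triangle inequalities are split correctly so that the $\om_j$ weights on $\support$ and $\support^c$ are handled consistently — and the observation that $\ZZ_{\om,\support}$ is genuinely scale-invariant so that normalizing $\mathbf{h}$ keeps it in the set where \eqref{eqn:nugetmrws} applies. The substantive content of the paper lies elsewhere, namely in lower-bounding $\zeta$ (equivalently, controlling the Gaussian mean width of $\ZZ_{\om,\support} \cap \SS^{N-1}$) so that $m = \bigO(s/\gm)$ Gaussian measurements force \eqref{eqn:nugetmrws} to hold with $\zeta$ bounded below by a constant with high probability.
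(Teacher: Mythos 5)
Your proposal is correct and follows essentially the same route as the paper's proof: feasibility of both $\x$ and $\sol$ gives $\|\A(\sol-\x)\|_2\leq 2\eta$, the optimality inequality $\|\sol\|_{\om,1}\leq\|\x\|_{\om,1}$ places $\sol-\x$ in the cone $\ZZ_{\om,\support}$ so that \eqref{eqn:nugetmrws} yields the $\ell_2$ bound, and Cauchy--Schwarz on $\support$ together with the cone inequality gives the weighted $\ell_1$ bound. No gaps.
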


\begin{proof}
By the definition of $\sol$ in \eqref{eqn:wl1min}, we have $\|\sol\|_{\om,1} \leq \|\x\|_{\om,1}$.  Recalling that $\x = \x_{\support}$, this implies
\begin{align}
\| \sol_{S^c} \|_{\om,1} \leq \| \x \|_{\om,1} - \| \sol_{S} \|_{\om,1} = \| \x_{\support} \|_{\om,1} - \| \sol_{S} \|_{\om,1} \leq  \| (\x - \sol)_{\support} \|_{\om,1},
\end{align}
that is, that $(\x - \sol) \in\ZZ_{\om,\support}$.
We now apply the triangle inequality to get
\begin{equation}
\label{eqn:prfthmnurws}
\|\A(\x-\sol)\|_2 \leq \|\A\sol-\obs\|_2 + \|\A\x-\obs\|_2 \leq 2\eta\,.
\end{equation}
By \eqref{eqn:nugetmrws} we have $\|\A(\x-\sol)\|_2 \geq \zeta\|\x-\sol\|_2$. This combined with \eqref{eqn:prfthmnurws} gives the $\ell_2$-norm error bound in \eqref{eqn:errorbound}.   To obtain the weighted $\ell_1$-norm error bound, note by Cauchy-Schwarz that
\begin{equation}
\label{eqn:wl1bound}
\| (\x - \sol)_{\support} \|_{\om,1} \leq \sqrt{\sum_{j \in \support} \om_j^2} \| (\x - \sol)_{\support}  \|_2 \leq 2 \eta \sqrt{s}/ \zeta\,.
\end{equation}
As $(\x - \sol) \in\ZZ_{\om,\support}$, it follows that 
\begin{align}
\label{eqn:wl1bound2}
\| (\x - \sol) \|_{\om,1} \leq \| (\x - \sol)_{\support^c} \|_{\om,1} + \| (\x - \sol)_{\support} \|_{\om,1} \leq 2 \| (\x - \sol)_{\support} \|_{\om,1} \leq  4\eta \sqrt{s}/ \zeta\,. \nonumber
\end{align}
Thus concluding the proof.
\end{proof}

\subsection{Gordon's escape through mesh theorem}\label{sec:gordon}

According to \eqref{eqn:nugetmrws}, successful recovery of a weighted sparse signal is achieved, when the minimal gain of the measurement
matrix over intersection of the sphere and the set $\ZZ_{\om,\support}$ is greater than some positive constant. For Gaussian matrices the probability of this event can be estimated by Gordon's escape through mesh theorem \cite{gordon1988milman}, see also [Theorem 9.21,\cite{foucart2013mathematical}]. We begin by
defining the Gaussian width of a set, which plays a key role in the proof of the theorem and our analysis.
\begin{definition}
\label{def:gwidth}
Given a standard Gaussian vector, $\g\in\RR^m$ with $g_i\sim \NN(0,1)$, the Gaussian width of a set $\X \subset\RR^m$, denoted as $\ell(\X)$, is defined as
 \begin{equation}
\label{eqn:gwidth}
\ell(\X) := \E\sup_{\x\in\X} \langle \g,\x \rangle.
\end{equation}
\end{definition}
 
 \pagebreak
\begin{theorem}[Gordon's escape through mesh, \cite{gordon1988milman}]
\label{thm:gordon}
Consider an $m\times N$ matrix $\A$ with $a_{ij}\bfrac{\bfrac{\mbox{\scriptsize i.i.d}}{\mathlarger{\mathlarger{\sim}}}}{}\NN(0,1)$, a standard Gaussian random vector $\g\in\RR^m$ and a set $\X\subset S^{N-1}$. For $t > 0$ we have
\begin{equation}
\label{eqn:gordon}
\P \left( \inf_{\x\in\X} \|\A\x\|_2 \leq \E\|\g\|_2 - \ell(\X) - t \right) \leq e^{-t^2/2}.
\end{equation}
\end{theorem}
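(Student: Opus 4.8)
The plan is to follow Gordon's original argument: realize $\inf_{\x\in\X}\|\A\x\|_2$ as the min--max of a Gaussian process, lower bound its expectation by Gordon's Gaussian comparison (Slepian-type) inequality against a decoupled auxiliary process, and finally upgrade this to the stated deviation inequality via Gaussian concentration of measure.

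First I would write $Q(\A) := \inf_{\x\in\X}\|\A\x\|_2 = \inf_{\x\in\X}\sup_{\u\in S^{m-1}}\langle\u,\A\x\rangle$, exhibiting $Q$ as the min--max of the centered Gaussian process $R_{\x,\u} := \langle\u,\A\x\rangle$ on $\X\times S^{m-1}$, whose covariance is $\E R_{\x,\u}R_{\x',\u'} = \langle\x,\x'\rangle\langle\u,\u'\rangle$. The comparison process is $Z_{\x,\u} := \langle\g,\u\rangle + \langle\mathbf{h},\x\rangle$ with $\g\in\RR^m$ and $\mathbf{h}\in\RR^N$ independent standard Gaussian vectors, chosen so that $\g$ pairs with $\u$ and $\mathbf{h}$ with $\x$. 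Since $R_{\x,\u}$ has variance $1$ whereas $Z_{\x,\u}$ has variance $2$, I would first add to $R$ an independent $g_0\sim\NN(0,1)$ that is constant in $(\x,\u)$: this equalizes second moments, it does not affect any increment (the summand $g_0$ cancels), and it leaves $\inf_\x\sup_\u R_{\x,\u}$ unchanged in expectation. One then checks the hypotheses of Gordon's min--max comparison lemma for the pair $(Z,\, R+g_0)$: the within-block increments (for fixed $\x$) of both processes equal $\|\u-\u'\|_2^2$, while the cross-block condition $\E(Z_{\x,\u}-Z_{\x',\u'})^2\geq\E(R_{\x,\u}-R_{\x',\u'})^2$ reads $4-2\langle\u,\u'\rangle-2\langle\x,\x'\rangle\geq 2-2\langle\x,\x'\rangle\langle\u,\u'\rangle$, i.e.\ the elementary inequality $(1-\langle\x,\x'\rangle)(1-\langle\u,\u'\rangle)\geq 0$, valid because $\X$ and $S^{m-1}$ lie on unit spheres. (Uncountability of the index sets is handled in the usual way, by applying the lemma on countable dense subsets and passing to the limit using path continuity.) Gordon's lemma then yields
\begin{equation*}
\E\, Q(\A)\;=\;\E\inf_{\x\in\X}\sup_{\u\in S^{m-1}}\bigl(R_{\x,\u}+g_0\bigr)\;\geq\;\E\inf_{\x\in\X}\sup_{\u\in S^{m-1}}Z_{\x,\u}.
\end{equation*}

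Next, the auxiliary min--max decouples: $\sup_{\u\in S^{m-1}}Z_{\x,\u}=\|\g\|_2+\langle\mathbf{h},\x\rangle$, so $\inf_{\x\in\X}\sup_{\u}Z_{\x,\u}=\|\g\|_2+\inf_{\x\in\X}\langle\mathbf{h},\x\rangle$, and taking expectations (using that $-\mathbf{h}$ has the same law as $\mathbf{h}$, together with the definition of the Gaussian width) gives $\E\,Q(\A)\geq \E\|\g\|_2-\ell(\X)$. Finally, $\A\mapsto Q(\A)$ is $1$-Lipschitz with respect to the Frobenius norm, being an infimum of the $1$-Lipschitz maps $\A\mapsto\|\A\x\|_2$ over $\x\in S^{N-1}$; hence the Gaussian concentration inequality for Lipschitz functions gives $\P\bigl(Q(\A)\leq\E\,Q(\A)-t\bigr)\leq e^{-t^2/2}$. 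Since $\E\|\g\|_2-\ell(\X)-t\leq\E\,Q(\A)-t$, the event in \eqref{eqn:gordon} is contained in $\{Q(\A)\leq\E\,Q(\A)-t\}$, which proves the claim.

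The one genuinely nontrivial ingredient is Gordon's Gaussian min--max comparison inequality itself, which I would simply invoke from \cite{gordon1988milman}; granting that, the work lies in arranging the comparison correctly -- matching the dimensions of the two auxiliary Gaussians to the two index sets, and adding $g_0$ to equalize variances, after which all covariance hypotheses collapse to $(1-\langle\x,\x'\rangle)(1-\langle\u,\u'\rangle)\geq 0$ -- plus the routine measurability/separability reduction for the uncountable index sets.
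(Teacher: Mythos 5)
Your proof is correct. Note that the paper does not prove Theorem \ref{thm:gordon} at all --- it imports it from \cite{gordon1988milman} (pointing also to Theorem 9.21 of \cite{foucart2013mathematical}) --- so there is no internal argument to compare against; what you have written is precisely the standard proof of the escape-through-the-mesh theorem: the min--max representation $\inf_{\x}\sup_{\u}\langle\u,\A\x\rangle$, the variance-equalizing $g_0$, Gordon's Gaussian min--max comparison against the decoupled process $\langle\g,\u\rangle+\langle\mathbf{h},\x\rangle$ (with the cross-block covariance condition collapsing, as you say, to $(1-\langle\x,\x'\rangle)(1-\langle\u,\u'\rangle)\geq 0$), and the upgrade from the expectation bound $\E\inf_{\x}\|\A\x\|_2\geq\E\|\g\|_2-\ell(\X)$ to the stated tail bound via concentration for $1$-Lipschitz functions of Gaussians. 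All directions of the comparison inequalities check out, and the measurability/separability caveat is handled appropriately.
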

\noindent We will also require the following standard result about the mean length of a standard Gaussian vector, $\g\in\RR^m$.
 \begin{equation}
\label{eqn:meanlength}
\E\|\g\|_2 = \frac{\sqrt{2}\Gm\left((m+1)/2\right)}{\Gm\left(m/2\right)}, \quad \mbox{and} \quad \frac{m}{\sqrt{m+1}} \leq \E\|\g\|_2 \leq \sqrt{m}\,,
\end{equation}
where $\Gm$ is the gamma function.

\subsection{Gaussian mean width for the set of weighted sparse signals}\label{sec:gwidth}

To apply Theorem \ref{thm:gordon}, we need to estimate $\ell\left(\ZZ_{\om,\support} \cap S^{N-1}\right)$, which is given in the following lemma. 
\begin{lemma}
\label{lem:gausswidth}
Fix weights $\om_j \geq 1$ and consider a subset $\support \subset [N]$ with $\om(\support) \leq s$ and  $| \support | \leq k \leq s$.  Let the set $\ZZ_{\om,\support}$ be as in \eqref{eqn:uwsnsppgaussfailsetws}. Then
\begin{equation}
\label{eqn:gausswidth}
\ell\left(\ZZ_{\om,\support} \cap S^{N-1}\right) \leq  \sqrt{k} +  \inf_{h \geq 0} \left( h\sqrt{s}  + \left( \sqrt{\frac{2}{\pi e}} \sum_{j\in\support^c} \frac{e^{-h^2 \om_j^2/2}}{h^2 \om_j^{2}}\right)^{1/2} \right)\,.
\end{equation}
\end{lemma}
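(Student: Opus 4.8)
The plan is to bound the Gaussian width $\ell(\ZZ_{\om,\support}\cap S^{N-1})$ by splitting any direction $\z$ in the set into its part on $\support$ and its part on $\support^c$, and handling the two pieces separately. For the $\support$-part, since $|\support|\le k$ and $\|\z\|_2\le 1$, the supremum of $\langle\g_\support,\z_\support\rangle$ over the unit ball is at most $\|\g_\support\|_2$, and by Jensen (or Cauchy--Schwarz on the expectation) $\E\|\g_\support\|_2\le\sqrt{k}$. This yields the $\sqrt{k}$ term. The work is in the $\support^c$-part: here the constraint defining $\ZZ_{\om,\support}$, namely $\|\z_{\support^c}\|_{\om,1}\le\|\z_\support\|_{\om,1}$, together with $\z_\support$ having weighted $\ell_1$-norm controlled by $\sqrt{s}\,\|\z_\support\|_2\le\sqrt s$ (Cauchy--Schwarz using $\om(\support)\le s$), tells us that $\z_{\support^c}$ lives in a scaled weighted-$\ell_1$ ball, $\|\z_{\support^c}\|_{\om,1}\le\sqrt s$. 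So I would show
\[
\ell(\ZZ_{\om,\support}\cap S^{N-1})\;\le\;\E\|\g_\support\|_2\;+\;\E\sup_{\|\z_{\support^c}\|_{\om,1}\le\sqrt s}\langle\g_{\support^c},\z_{\support^c}\rangle\;\le\;\sqrt k\;+\;\sqrt s\;\E\bigl\|(\g_j/\om_j)_{j\in\support^c}\bigr\|_\infty,
\]
where the last step is duality between the weighted $\ell_1$-norm and the weighted $\ell_\infty$-norm $\max_j |g_j|/\om_j$.

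It then remains to bound $\E\max_{j\in\support^c}|g_j|/\om_j$ by $\inf_{h\ge 0}\bigl(h + (\text{tail term})/\sqrt s\bigr)$, which is a standard maximal-inequality computation via the moment-generating-function / truncation trick. I would write, for any $h>0$,
\[
\E\max_{j\in\support^c}\frac{|g_j|}{\om_j}\;\le\; h\;+\;\E\Bigl(\max_{j\in\support^c}\frac{|g_j|}{\om_j}-h\Bigr)_+\;\le\; h\;+\;\sum_{j\in\support^c}\E\Bigl(\frac{|g_j|}{\om_j}-h\Bigr)_+,
\]
and then estimate each $\E(|g_j|/\om_j - h)_+$. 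Since $|g_j|/\om_j$ is (a scalar multiple of) a half-normal, one has $\E(|g_j|/\om_j-h)_+=\int_{h}^\infty \Pr(|g_j|/\om_j>u)\,du\le \int_h^\infty 2e^{-u^2\om_j^2/2}\,du$, and a crude Gaussian-tail bound $\int_h^\infty e^{-u^2\om_j^2/2}du\le e^{-h^2\om_j^2/2}/(h\om_j^2)$ (valid after using $u\ge h$ in one factor) gives each term of the order $\tfrac{2}{h\om_j^2}e^{-h^2\om_j^2/2}$; multiplying through by $\sqrt s$ and bookkeeping the constant $\sqrt{2/(\pi e)}$ — which comes out of writing the half-normal density $\sqrt{2/\pi}\,e^{-u^2\om_j^2/2}\om_j$ and absorbing an $e^{-1/2}$-type slack from the exponent manipulation — produces exactly the stated bound $h\sqrt s+\bigl(\sqrt{2/(\pi e)}\sum_{j\in\support^c}e^{-h^2\om_j^2/2}/(h^2\om_j^2)\bigr)^{1/2}$. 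Taking the infimum over $h\ge0$ finishes it.

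The main obstacle I anticipate is getting the tail estimate tight enough to land the constant $\sqrt{2/(\pi e)}$ and the exact power $h^2\om_j^2$ in the denominator rather than something weaker like $h^2$: one has to be careful about whether the $e^{-h^2\om_j^2/2}$ in the numerator comes with a $1/(h\om_j)$ or $1/(h\om_j^2)$ factor, and where the square-root around the sum comes from (I expect it arises because the $\support^c$ contribution is first bounded in expectation by a quantity whose \emph{square} is the sum, e.g.\ via bounding $\E\max_j(\cdot)$ by $\sqrt{\E\max_j(\cdot)^2}$ or by a direct $\ell_\infty$-versus-$\ell_2$ relaxation on the dual variable). A secondary point to check is that $\ZZ_{\om,\support}$ is a cone, so intersecting with $S^{N-1}$ is the same as intersecting with the unit ball for width purposes, which justifies replacing $\|\z\|_2=1$ by $\|\z\|_2\le1$ in the two suprema above; this is routine since the linear functional $\langle\g,\cdot\rangle$ is maximized on the boundary.
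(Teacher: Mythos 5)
There is a genuine gap, and it is the one you yourself flag: your decomposition cannot produce the stated bound, because the term you get for the $\support^c$ contribution has the wrong structure. Your route gives
\[
\ell\left(\ZZ_{\om,\support}\cap S^{N-1}\right)\;\le\;\sqrt{k}\;+\;\sqrt{s}\,\E\max_{j\in\support^c}\frac{|g_j|}{\om_j}\;\le\;\sqrt{k}\;+\;h\sqrt{s}\;+\;\sqrt{s}\sum_{j\in\support^c}\E\Bigl(\frac{|g_j|}{\om_j}-h\Bigr)_+ ,
\]
i.e.\ a \emph{sum} of tail expectations multiplied by an extra factor $\sqrt{s}$ (the radius of the weighted $\ell_1$ ball you pay for via duality with the weighted $\ell_\infty$ norm). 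The lemma instead asserts $\bigl(\sum_{j\in\support^c}\cdots\bigr)^{1/2}$ with \emph{no} $\sqrt{s}$ prefactor. No version of the maximal inequality (including $\E\max_j X_j\le(\sum_j\E X_j^2)^{1/2}$) removes that $\sqrt{s}$, since it multiplies the entire $\ell_\infty$ expectation. The difference is not cosmetic: in the regime that drives Theorem \ref{thm:main} (weights $\om_j=\max\{1,\sqrt{2\gm\log(j/s)}\}$, $h=\sqrt{2/\gm}$, $\gm$ of constant order), the paper's term is $O(\sqrt{\gm s})$ while yours is $O(\gm s^{3/2})$, which after squaring would give $m=O(s^3)$ rather than $m=O(s/\gm)$. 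So your argument proves a correct but strictly weaker inequality, not the lemma.

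The missing idea is the polar-cone step. The paper does not relax $\ZZ_{\om,\support}\cap S^{N-1}$ to a weighted $\ell_1$ ball; it passes to the nonnegative cone $\widetilde{\ZZ}_{\om,\support}$ and uses weak LP duality to bound the width by $\E\min_{\u\in\widetilde{\ZZ}^*_{\om,\support}}\|\u+\widetilde{\g}\|_2$, where $\widetilde{\g}$ has entries $|g_j|$. Choosing the explicit dual vectors $u_j=h\om_j$ on $\support$ and $u_j\ge-h\om_j$ on $\support^c$ turns the $\support^c$ contribution into an $\ell_2$ norm of soft-thresholded Gaussians, $\E\bigl(\sum_{j\in\support^c}S_{h\om_j}(g_j)^2\bigr)^{1/2}\le\bigl(\sum_{j\in\support^c}\E S_{h\om_j}(g_j)^2\bigr)^{1/2}$ by Jensen. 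This is exactly where the square root around the sum comes from, with no $\sqrt{s}$ attached, and the second-moment computation $\E S_\lambda(g)^2\le\sqrt{2/(\pi e)}\,\lambda^{-2}e^{-\lambda^2/2}$ delivers the constant and the $\om_j^{-2}$ in the denominator. Your handling of the $\support$ part ($\sqrt{k}$) and your Gaussian tail estimates are fine; it is the $\ell_1$-ball relaxation that loses the lemma.
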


\begin{proof}
We start by replacing the $\sup$ by the $\max$ in the definition of $\ell\left(\ZZ_{\om,\support} \cap S^{N-1}\right)$ in \eqref{eqn:gwidth}, since $\ZZ_{\om,\support} \cap S^{N-1}$ is compact. Observe that 
\begin{equation*}
\label{eqn:cvxconelogic}
\max_{\z\in\ZZ_{\om,\support} \cap S^{N-1}} \langle \z,\g\rangle = \max_{\z\in\ZZ_{\om,\support} \cap S^{N-1}\cap\{\z ~: ~z_j \geq 0\}} \sum_{j=1}^N |g_j|z_j\,.
\end{equation*}
Define the {\em convex cone}
\begin{equation}
\label{eqn:cvxcone}
\widetilde{\ZZ}_{\om,\support} := \ZZ_{\om,\support} \cap\{\z ~: ~z_j \geq 0\} 
\end{equation} 
and its dual, denoted as $\widetilde{\ZZ}^*_{\om,\support}$, is defined: 
\begin{equation}
\label{eqn:cvxconedual}
\widetilde{\ZZ}^*_{\om,\support} := \left\{\u\in\RR^N ~: ~\langle \u,\z \rangle \geq 0 ~\forall \z\in\ZZ_{\om,\support}\right\}.
\end{equation}
We apply {\em duality} to bound the Gaussian width, $\ell\left(\ZZ_{\om,\support} \cap S^{N-1}\right)$, as follows
\begin{align}
\label{eqn:dualitybound0}
\ell\left(\ZZ_{\om,\support} \cap S^{N-1}\right) & = \E \max_{\z\in\widetilde{\ZZ}_{\om,\support}\cap S^{N-1}} \langle \z,\widetilde{\g}\rangle \\
\label{eqn:dualitybound}
& \leq \E \min_{\u\in\widetilde{\ZZ}^*_{\om,\support}} \|\u + \widetilde{\g}\|_2\,,
\end{align}
where $\widetilde{\g}$ has components $|g_j|, ~j\in [N]$. 
A linear program in maximization linear form 
\begin{equation}
\label{eqn:primal}
\max ~\langle{\bf c}, \x\rangle \quad \mbox{s.t.} \quad \A\x \leq {\bf b}, ~\x\geq 0
\end{equation}
like in \eqref{eqn:dualitybound0}, referred to as the primal linear program, 
has its dual linear program
\begin{equation}
\label{eqn:pdual}
\min ~\langle{\bf b}, \obs\rangle \quad \mbox{s.t.} \quad \A^T\obs \geq {\bf c}, ~\obs\geq 0
\end{equation}
as in \eqref{eqn:dualitybound}. Bound  \eqref{eqn:dualitybound} is due to the weak duality principle, which states that the optimum of the dual is an upper bound to the optimum of the primal. For details about duality theory, see Appendix B.5 of \cite{foucart2013mathematical}.

To use the above bound we need to characterize the dual cone, $\widetilde{\ZZ}^*_{\om,\support}$, as follows for instance.

\begin{proposition}
\label{pro:setconstruct}
The dual cone of $\widetilde{\ZZ}_{\om,\support}$ satisfies  
\begin{equation}
\label{eqn:setconstruct}
\widetilde{\ZZ}^*_{\om,\support} \supset \VV_{\om,\support} := \bigcup_{h\geq 0}\left\{ \u \in \RR^N : u_j = h\om_j \mbox{ for } j\in\support, ~u_j \geq -h\om_j ~\mbox{for} ~j\in\support^c \right\}.
\end{equation}
\end{proposition}

\begin{proof}
For any $\z\in\widetilde{\ZZ}_{\om,\support}$, any $\u\in \VV_{\om,\support}$, and any $h\geq 0$ we have 
\begin{align}
\label{eqn:setconstructprf1}
\langle \u,\z \rangle = \sum_{j\in\support} u_j z_j + \sum_{j\in\support^c} u_j z_j & \geq h\sum_{j\in\support} \om_j z_j - h\sum_{j\in\support^c} \om_j z_j \nonumber\\ 
& = h\left(\|\z_{\support}\|_{\om,1} - \|\z_{\support^c}\|_{\om,1}\right) \geq 0.
\end{align}
The first equality follows from the definition of $\langle \cdot,\cdot \rangle$, while the first inequality follows from the definition of $\VV_{\om,\support}$. The second equality is due to the definition of $\|\cdot\|_{\om,1}$, while the last inequality is due to the definition of $\widetilde{\ZZ}_{\om,\support}$. This shows that $\u\in \widetilde{\ZZ}^*_{\om,\support}$.
\end{proof}

Therefore, using \eqref{eqn:dualitybound} with Proposition \ref{pro:setconstruct} we bound $\ell\left(\ZZ_{\om,\support} \cap S^{N-1}\right)$ as follows.
\begin{align}
\label{eqn:gwidthbound1}
\ell\left(\ZZ_{\om,\support} \cap S^{N-1}\right) 	& \leq \E\min_{\u\in\VV_{\om,\support}} \|\u + \widetilde{\g}\|_2 \\
\label{eqn:gwidthbound2}
									& \leq \E\min_{\u\in\VV_{\om,\support}} \|\u_{\support^c}  + \widetilde{\g}_{\support^c} \|_2 + \E\|{\bf v}_{\support} + \widetilde{\g}_{\support}\|_2 \\
\label{eqn:gwidthbound3}
									& \leq \E\min_{u_j\geq -h\om_j} \left(\sum_{j\in\support^c} |\widetilde{g}_j + u_j|^2\right)^{1/2} + \E\|{\bf v}_{\support}\|_2  + \E\|\widetilde{\g}_{\support}\|_2 \\
\label{eqn:gwidthbound4}
									& \leq \E\min_{u_j\geq -h\om_j} \left(\sum_{j\in\support^c} |\widetilde{g}_j + u_j|^2\right)^{1/2} + h\sqrt{\om(\support)}  + \sqrt{k}\,.
\end{align}
Inequality \eqref{eqn:gwidthbound2} and  \eqref{eqn:gwidthbound3} are due to the application of the triangle inequality and are true for any ${\bf v}\in\VV_{\om,\support}$ and $h\geq 0$. The first term in \eqref{eqn:gwidthbound4} is due to the fact that $u_j = h\om_j$ for $j\in\support$ and the definition of the weighted sparsity, the second term comes from the upper bound in \eqref{eqn:meanlength}. 
Next we upper bound the first term of \eqref{eqn:gwidthbound4} as follows.
\begin{equation}
\label{eqn:gwidthbound5}
 \E\min_{u_j\geq -h\om_j} \left(\sum_{j\in\support^c} |\widetilde{g}_j + u_j|^2\right)^{1/2} 
 \leq  \E \left(\sum_{j\in\support^c} S_{h\om_j}(g_j)^2\right)^{1/2} 
\leq \left(\E\sum_{j\in\support^c} S_{h \om_j}(g_j)^2\right)^{1/2}\,,
\end{equation}
where $g_j$ is a component of the standard Gaussian vector $\g$, and $S_{h\om_j}(\cdot)$ is the soft-thresholding operator,
\begin{equation}
\label{eqn:softthres}
S_\lambda(x) := \mbox{sign}(x) \left[ \max(0,|x| - \lambda) \right]\,.
\end{equation}
The last inequality in \eqref{eqn:gwidthbound5} comes from the application of H\"older's inequality.
We use the symmetry of Gaussian random variable $g$ and the soft-thresholding operation with a change of variable to compute $\E S_{\lambda}(g)^2$, as derived in Section 9.2 of \cite{foucart2013mathematical}:
\begin{align}
\label{eqn:gwidthbound6a}
\E S_{\lambda}(g)^2  & = \frac{1}{\sqrt{2\pi}} \int_{-\infty}^{\infty} S_{\lambda}(u)^2 e^{-u^2/2}du = \frac{2}{\sqrt{2\pi}} \int_{0}^{\infty} S_{\lambda}(u)^2 e^{-u^2/2}du \\
\label{eqn:gwidthbound6b}
& = \sqrt{\frac{2}{\pi}} \int_{\lambda}^{\infty} (u-\lambda)^2 e^{-u^2/2}du = e^{-\lambda^2/2}\sqrt{\frac{2}{\pi}} \int_{0}^{\infty} v^2 e^{-v^2/2}e^{-\lambda v}dv \,.
\end{align}
Using the bound $v e^{-v^2/2} \leq e^{-1/2},$ we can upper bound $\E S_{\lambda}(g)^2$ further from \eqref{eqn:gwidthbound6b},
\begin{align}
\label{eqn:gwidthbound6c}
\E S_{\lambda}(g)^2 & \leq e^{-1/2}e^{-\lambda^2/2}\sqrt{\frac{2}{\pi}} \int_{0}^{\infty} ve^{-\lambda v}dv = \sqrt{\frac{2}{\pi e}} \lambda^{-2}e^{-\lambda^2/2} \,.
\end{align}

Finally, we upper bound the right hand side (RHS) expression of \eqref{eqn:gwidthbound5} as 
\begin{equation}
\label{eqn:gwidthbound6d}
\left(\sum_{j\in\support^c} \E S_{h \om_j}(g_j)^2\right)^{1/2} \leq \left( \sqrt{\frac{2}{\pi e}} \sum_{j\in\support^c} \frac{e^{-h^2 \om_j^2/2}}{h^2 \om_j^{2}}\right)^{1/2}\,. \nonumber
\end{equation}
We have arrived at an $h$-dependent bound for the Gaussian mean width \eqref{eqn:gwidthbound4}.  Taking the infimum of the bound over $h \geq 0$ provides the lemma.
\end{proof}

We now apply the bound in Lemma \ref{lem:gausswidth} to derive a general result on the sample complexity of weighted sparse recovery using Gaussian random measurements. 
\begin{lemma}
\label{lem:nuscgaussrws}
Fix weights $\om_j \geq 1$, and consider a signal $\x\in\RR^N$ supported on $\support\subset [N]$ with $|\support|\leq k$ and $\om(\support) = \sum_{j \in \support} \om_j^2 \leq s$.  Let $\A\in\RR^{m\times N}$ with $a_{ij}\bfrac{\bfrac{\mbox{\scriptsize i.i.d}}{\mathlarger{\mathlarger{\sim}}}}{}\NN(0,1)$, and fix $\zeta >0$.  
Assume that noisy measurements are taken,  $\obs = \A\x + \noise$ with $\|\noise\|_2\leq \eta$. If, for some $0<\d<1$,
\begin{equation}
\label{eqn:nuscgaussrws}
\frac{m}{\sqrt{m+1}} \geq  \sqrt{k} + \sqrt{2\log\left(\frac{1}{\d}\right)} + \zeta + 
\inf_{h > 0} \left( h\sqrt{s}  + \left( \sqrt{\frac{2}{\pi e}} \sum_{j\in\support^c} \frac{e^{-h^2 \om_j^2/2}}{h^2 \om_j^{2}}\right)^{1/2} \right),
\end{equation}
Then with probability exceeding $1-\d$, any solution $\sol$ of \eqref{eqn:wl1min} satisfies
\begin{align}
\label{eqn:errorbound2}
\|\sol - \x\|_2 \leq 2\eta / \zeta, \quad \mbox{and} \quad \|\sol - \x\|_{\om,1} \leq 4\sqrt{s}\eta/\zeta\,.
\end{align}
\end{lemma}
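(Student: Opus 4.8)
The plan is to assemble three ingredients already in hand: the deterministic recovery guarantee of Lemma~\ref{lem:nurws}, the Gaussian mean width bound of Lemma~\ref{lem:gausswidth}, and Gordon's escape through mesh theorem (Theorem~\ref{thm:gordon}). The bridge between them is the observation that the constant $\zeta$ in the hypothesis \eqref{eqn:nugetmrws} of Lemma~\ref{lem:nurws} is exactly the minimal gain $\inf_{\z\in\ZZ_{\om,\support},\,\|\z\|_2=1}\|\A\z\|_2$, and since $\ZZ_{\om,\support}$ is a cone this equals $\inf_{\z\in\ZZ_{\om,\support}\cap S^{N-1}}\|\A\z\|_2$, precisely the random quantity that Gordon's theorem bounds from below with high probability.

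First I would apply Theorem~\ref{thm:gordon} with $\X = \ZZ_{\om,\support}\cap S^{N-1}$ (a closed subset of the sphere, hence compact, so the infimum is attained) and the calibration $t = \sqrt{2\log(1/\d)}$, which makes the failure probability $e^{-t^2/2}$ equal to $\d$. This yields that, with probability at least $1-\d$,
$$\inf_{\z\in\ZZ_{\om,\support}\cap S^{N-1}}\|\A\z\|_2 \;>\; \E\|\g\|_2 \;-\; \ell\!\left(\ZZ_{\om,\support}\cap S^{N-1}\right) \;-\; \sqrt{2\log(1/\d)}.$$
Next I would lower bound the right-hand side deterministically: replace $\E\|\g\|_2$ by its lower bound $m/\sqrt{m+1}$ from \eqref{eqn:meanlength}, and replace $\ell\!\left(\ZZ_{\om,\support}\cap S^{N-1}\right)$ by the upper bound $\sqrt{k} + \inf_{h\geq 0}\big(h\sqrt{s} + (\sqrt{2/(\pi e)}\sum_{j\in\support^c} e^{-h^2\om_j^2/2}/(h^2\om_j^2))^{1/2}\big)$ from Lemma~\ref{lem:gausswidth}, noting that the infimum over $h\geq 0$ coincides with that over $h>0$ since each summand diverges as $h\to 0^+$. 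Substituting these two bounds and invoking the standing hypothesis \eqref{eqn:nuscgaussrws}, the right-hand side above is at least $\zeta$.

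Thus, on an event of probability at least $1-\d$, we have $\inf_{\z\in\ZZ_{\om,\support},\,\|\z\|_2=1}\|\A\z\|_2 \geq \zeta$, i.e.\ the deterministic hypothesis \eqref{eqn:nugetmrws} of Lemma~\ref{lem:nurws} holds for this realization of $\A$; applying Lemma~\ref{lem:nurws} on that event then delivers the error bounds $\|\sol-\x\|_2\leq 2\eta/\zeta$ and $\|\sol-\x\|_{\om,1}\leq 4\sqrt{s}\eta/\zeta$, which is \eqref{eqn:errorbound2}. I do not anticipate a substantive obstacle here: the argument is a clean composition of the three prior results, and the only points needing care are the cone-homogeneity step that legitimizes invoking Gordon's theorem on the sphere, the calibration $t=\sqrt{2\log(1/\d)}$ to meet the prescribed failure probability, and the harmless passage from $\inf_{h\geq 0}$ to $\inf_{h>0}$ when transferring the width estimate into the measurement-count inequality.
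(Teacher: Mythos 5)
Your proposal is correct and follows essentially the same route as the paper's own (sketched) proof: apply Gordon's escape-through-mesh theorem to $\ZZ_{\om,\support}\cap S^{N-1}$ with $t=\sqrt{2\log(1/\d)}$, lower bound $\E\|\g\|_2$ by $m/\sqrt{m+1}$ and upper bound the width via Lemma~\ref{lem:gausswidth}, conclude that the minimal gain exceeds $\zeta$ with probability $1-\d$, and then invoke Lemma~\ref{lem:nurws}. Your explicit attention to the cone-homogeneity step and the $\inf_{h\geq 0}$ versus $\inf_{h>0}$ passage is a bit more careful than the paper's sketch, but the argument is the same.
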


The proof of Lemma \ref{lem:nuscgaussrws} in light of Lemma \ref{lem:gausswidth} is by now standard, see for example Theorem 3 of \cite{kabanava2014robust}, or more generally in \cite{chandrasekaran2012convex}. We therefore skip the details of the proof and provide only a sketch. 

\begin{proof}[Sketch of proof of Lemma \ref{lem:nuscgaussrws}]
For $t>0$, we want to determine
\begin{equation}
\label{eqn:gordon2b}
\P \left( \inf_{\z\in\ZZ_{\om,\support}\cap S^{N-1}} \|\A\z\|_2 \leq \E\|\g\|_2 - \ell\left(\ZZ_{\om,\support}\cap S^{N-1}\right) - \zeta - t \right) \leq e^{-t^2/2}.
\end{equation}

We have a lower bound of $\E\|\g\|_2$ and an upper bound of $\ell\left(\ZZ_{\om,\support} \cap S^{N-1}\right)$ in \eqref{eqn:meanlength} and \eqref{eqn:gausswidth} respectively, thus the task remains to choose an appropriate value for $t$ so that we have
\begin{equation}
\label{eqn:gordon3b}
\E\|\g\|_2 - \ell\left(\ZZ_{\om,\support}\cap S^{N-1}\right) - \zeta - t \geq 0. \nonumber
\end{equation}
Setting $t=\sqrt{2\log(1/\d)}$ upper bounds the probability \eqref{eqn:gordon2b} by $\d$, leading to the required sampling bound \eqref{eqn:nuscgaussrws} with probability at least $1-\d$ since 
\begin{align}
\label{eqn:gordon2c}
\P \left( \inf_{\z\in\ZZ_{\om,\support}\cap S^{N-1}} \|\A\z\|_2 \geq \zeta \right) & \geq \P \left( \inf_{\z\in\ZZ_{\om,\support}\cap S^{N-1}} \|\A\z\|_2 \geq \E\|\g\|_2 - \ell\left(\ZZ_{\om,\support}\cap S^{N-1}\right) - t \right) \nonumber \\ 
& \geq 1-\d\,, \nonumber
\end{align}
This concludes the proof.
\end{proof}

\section{Sample complexity for weighted sparse approximation}\label{sec:sample}

With Lemmas \ref{lem:gausswidth} and \ref{lem:nuscgaussrws} in hand, we are now ready to state the main result: under very mild growth conditions on the weights $\om_j$, the sample complexity of weighted sparse recovery given in Lemma \ref{lem:nuscgaussrws} is linear with respect to the sparsity of the signal.

\begin{theorem}
\label{thm:main}
Fix weights $\om_j \geq 1$, and suppose that $\gm > 0$ is such that the weights $\mathop{\om_{j} \geq \max\left\{1, \sqrt{2\gm\log({j}/{s})}\right\}, ~\forall j \in [N]}$.  Fix $\A\in\RR^{m\times N}$ with $a_{ij}\bfrac{\bfrac{\mbox{\scriptsize i.i.d}}{\mathlarger{\mathlarger{\sim}}}}{}\NN(0,1)$ and $\mathop{\zeta > 0}$.  
Consider a signal $\x\in\RR^N$ supported on $\support \subset [N]$ with $|\support|\leq k$ and  $\mathop{\om(\support) = \sum_{j \in \support} \om_j^2 \leq s}$. Assume that noisy measurements are taken,  $\obs = \A\x + \noise$ with $\|\noise\|_2\leq \eta$. Suppose, for some $0<\d<1,$
 \begin{equation}
\label{eqn:nuscgaussrws1}
\frac{m}{\sqrt{m+1}} \geq \sqrt{ \frac{2s}{\gm}}  + \sqrt{k} + \sqrt{\gm s} + \sqrt{2\log\left(\frac{1}{\d}\right)} + \zeta\,.
\end{equation}
Then with probability exceeding $1-\d$, any solution $\sol$ of \eqref{eqn:wl1min} satisfies
\begin{align}
\label{eqn:errorbound21}
\|\sol - \x\|_2 \leq  2 \eta /\zeta\,, \quad \mbox{and} \quad \|\sol - \x\|_{\om,1} \leq 4\sqrt{s}\eta/\zeta\,.
\end{align}
\end{theorem}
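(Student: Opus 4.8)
The plan is to obtain Theorem \ref{thm:main} directly from Lemma \ref{lem:nuscgaussrws}: the only remaining work is to show that the mild growth condition $\om_j\geq\max\{1,\sqrt{2\gm\log(j/s)}\}$ forces the $h$-dependent term appearing in \eqref{eqn:nuscgaussrws}, namely $\inf_{h>0}\big(h\sqrt{s}+(\sqrt{2/(\pi e)}\sum_{j\in\support^c}e^{-h^2\om_j^2/2}/(h^2\om_j^2))^{1/2}\big)$, to be at most $\sqrt{2s/\gm}+\sqrt{\gm s}$. Once this single inequality is in hand, \eqref{eqn:nuscgaussrws1} implies \eqref{eqn:nuscgaussrws} (the two right-hand sides coincide after rearranging the summands), and the error bounds \eqref{eqn:errorbound21} are exactly those supplied by Lemma \ref{lem:nuscgaussrws}.

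To bound the infimum I would evaluate the bracketed expression at the single point $h=\sqrt{2/\gm}$. With this choice the first term is exactly $h\sqrt{s}=\sqrt{2s/\gm}$, so everything reduces to showing that $\sqrt{2/(\pi e)}\sum_{j\in\support^c}e^{-h^2\om_j^2/2}/(h^2\om_j^2)\leq\gm s$, because then the second term equals $(2/(\pi e))^{1/4}\sqrt{\gm s}\leq\sqrt{\gm s}$ since $2/(\pi e)<1$, and adding the two gives the claim. For each $j\in\support^c$ I would invoke two elementary facts. First, $\om_j\geq 1$ together with $h^2=2/\gm$ gives $h^2\om_j^2\geq 2/\gm$, hence $1/(h^2\om_j^2)\leq\gm/2$ and also $e^{-h^2\om_j^2/2}\leq 1$. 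Second, for the indices $j>s$ the growth hypothesis gives $\om_j^2\geq 2\gm\log(j/s)$, so $h^2\om_j^2/2=\om_j^2/\gm\geq 2\log(j/s)$ and therefore $e^{-h^2\om_j^2/2}\leq(s/j)^2$. Combining the two, every summand is at most $(\gm/2)\min\{1,(s/j)^2\}$, and summing over all $j\geq 1$ (which only enlarges the sum) yields $(\gm/2)(\sum_{1\leq j\leq s}1+\sum_{j>s}(s/j)^2)\leq(\gm/2)(s+s)=\gm s$, using $\sum_{j>s}j^{-2}\leq\int_s^\infty x^{-2}\,dx=1/s$.

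The only genuine subtlety is the choice $h^2=2/\gm$: taking the "extra factor of $2$" over the naive choice $h^2=1/\gm$ is what makes the exponent at least $2\log(j/s)$, so that the tail $\sum_{j>s}(s/j)^2$ converges to $O(s)$ rather than producing an $O(s\log(N/s))$ harmonic-type sum; one then still needs the crude bound $\om_j\geq 1$ to control the "head" $j\leq s$, where the logarithmic growth condition is vacuous. Everything else is bookkeeping — checking that the numerical constant $(2/(\pi e))^{1/4}$ stays below $1$ so that no spurious factor appears in front of $\sqrt{\gm s}$, and noting that $|\{j\in\support^c:j\leq s\}|\leq s$ since these are distinct positive integers at most $s$. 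The special cases (a)--(c) mentioned in the introduction would then follow by specializing the weights, but they are not part of this statement.
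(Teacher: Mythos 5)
Your proposal is correct and follows essentially the same route as the paper: the paper likewise reduces the theorem to Lemma \ref{lem:nuscgaussrws}, evaluates the infimum at $h=\sqrt{2/\gm}$, and splits the sum over $\support^c$ into a head controlled by $\om_j\geq 1$ and a tail controlled by the logarithmic growth condition (the paper splits at $j\leq s\,e^{1/(2\gm)}$ rather than at $j\leq s$ and arrives at the bound $\tfrac{3\gm s}{2e}$ in place of your $\gm s$). The only blemish is the integral comparison $\sum_{j>s}j^{-2}\leq 1/s$, which can fail for non-integer $s$ (e.g.\ $s=2.9$); the correct bound $\sum_{j>s}j^{-2}\leq 1/\lfloor s\rfloor\leq 2/s$ costs at most a factor $3/2$ in your estimate of the sum, which is still absorbed by the slack in $(2/(\pi e))^{1/4}<1$, so the conclusion stands.
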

\noindent This theorem provides a sharp bound on the sample complexity for weighted sparse approximation in terms of how well the weights used in the weighted $\ell_1$ reconstruction program align with the signal support.   The more that  the support of $\support$ coincides with a subset of indices having small relative weight (corresponding to smaller weighted sparsity $s$),  and the more that the complementary support $\support^c$ coincides with indices having larger weight (corresponding to larger value of $\gm$), the smaller the necessary number of measurements $m$ to recover a signal supported on $\support$.  Note that the opposite statement is also true: if $\support$ is supported on a set where the weights are large, then the weighted sparsity $s$ becomes large and the sample complexity of weighted sparse approximation becomes \emph{higher} than that for $\ell_1$-minimization with uniform weights.


\noindent We can simplify the bound on the number of measurements appearing in \eqref{eqn:nuscgaussrws1}: using the property that for any positive numbers $A, B, C$ and $D$, 
\begin{equation}
\label{eqn:sqrtbound}
\sqrt{D} \geq \sqrt{A} + \sqrt{B} + \sqrt{C} ~\Rightarrow ~D \geq 2A + 2B + 2C\,.
\end{equation}
and ignoring the terms depending on the robustness and confidence parameters $\zeta$ and $\d$, we have the  sampling rate of
\begin{equation}
\label{eqn:unumsamplesrws}
m = \bigO \left({s}/{\gm} \right).
\end{equation}
Indeed, Theorem \ref{thm:main} recovers several known results on the sample complexity of sparse approximation in special cases (up to a universal constant), as we now outline.

\begin{itemize}
\item {\bf Uniform weights.}  In standard $\ell_1$-minimization, the weights are all $\om_j \equiv 1$.  This is a special case of Theorem \ref{thm:main} with $s = k$ and 
$$
\gm = \frac{1}{2 \log(N/k)},
$$
thus recovering the known sample complexity $m = \bigO \left(k \log(N/k)\right)$ for unweighted sparse recovery, \cite{ayaz2014nonuniform,candes2006quantitative,chandrasekaran2012convex}. 

\item {\bf Polynomially growing weights.}  In \cite{rauhut2015interpolation}, the authors considered the case of polynomially growing weights in an application to smooth function interpolation.  While they did not focus on reduced sample complexity for weights as such, they did show that orthonormal function systems which are not uniformly bounded -- and thus do not satisfy the requirements for unweighted sparse recovery at sample complexity $m = \bigO(k \log(N/k) )$ -- nevertheless can recover weighted sparse signals at this sample complexity with weighted $\ell_1$-minimization.  It was however mentioned in a remark in \cite{rauhut2015interpolation} that for weights satisfying polynomial growth, $\om_j \geq j^{\alpha},$ the set of $s$-weighted sparse signals can be reconstructed via weighted $\ell_1$-minimization using $m = \bigO(\alpha^{-1}s\log(s))$ Gaussian measurements.  Since $j^{\alpha} \geq \log(j^{\alpha}) \geq \alpha \log(j/s)$, Theorem \ref{thm:main} implies that in the setting of polynomially growing weights $\om_j \geq j^{\alpha},$ the sample complexity required to recover a weighted $s$-sparse signal scales no worse than $m = \bigO({s}/{\alpha})$.  

\pagebreak
\item {\bf Sparse recovery with prior support estimates.}  In this setting, one a priori splits the support set $[N]$ into two subsets $\suppest$ and $\suppest^c$: $\suppest$, which one has reason to believe includes the support of the underlying signals of interest, one places a smaller weight $w_1 \geq 1$.  On $\suppest^c$, one places a larger weight of $w_2 > w_1$.  The sample complexity thus depends on how well the prior support estimate $\suppest$ agrees with the true underlying support $\support$ and on the confidence $w_2/w_1$ in this belief.  In terms of weighted sparse recovery,  the more disjoint are the true and estimated support sets $\support$ and $\suppest$, the larger will be the weighted sparsity of the signal supported on $\support$.

Normalize without loss of generality so that $w_1 = 1$ and $w_2 := 1/\rho > 1$.   Suppose that $\x$ is supported on a set $\support$ of size $| \support | = k$.  Then $\om_j = 1$ for $j \in \support \cap \suppest$, and $\om_j = 1/\rho > 1$ for $j \in \support \cap \suppest^c$.   It follows that the weighted cardinality of the support is
\begin{equation}
\label{s:prior}
s \equiv \om(\support) = | \support \cap \suppest | + \rho^{-2} | \support \cap \suppest^c |
\end{equation}
At the same time, $\om_j = 1$ for $j \in \support^c \cap \suppest$ and $\om_j = 1/\rho > 1$ for $j \in \support^c \cap \suppest^c.$  Assuming without loss that the indices in $\support^c$ are ordered according to the increasing arrangement of their weights, we find that for $j \in \support^c$,
\begin{equation}
\label{eqn:kappa}
\om_j \geq  \max\left\{ 1, \sqrt{2\gm \log(j/s)} \right\}\,,  \quad \text{with} \quad \gm = \min\left\{1,\frac{1}{2\rho^2 \log(N/s)}\right\}\,.
\end{equation}
Applying Theorem \ref{thm:main} gives thus the sample complexity for sparse recovery with prior support estimates:
\begin{align}
\label{prior:support}
m = \bigO \left( \frac{s}{\gm} \right) & = \bigO \left( s \rho^2 \log(N/s) + s \right) \nonumber \\
&=  \bigO \left(  \left( \rho^2| \support \cap \suppest | +  | \support \cap \suppest^c | \right)\log(N/s) + s \right)\,.
\end{align}
As in \cite{mansour2014recovery}, fix parameters $\alpha, \beta \in [0,1]$, and suppose $| \suppest | = \beta | \support| = \beta k,$ and suppose that $| \support \cap \suppest | = \alpha | \suppest |$.  Then if $2\rho^2\log(N/s) \geq 1$ ($\Rightarrow \rho \geq \frac{1}{\sqrt{2\log(N/s)}} > 0$ if $N\neq s$), the dominant term in the sampling bound in \eqref{prior:support} is bounded above by
\begin{equation}
\label{nusampleboundUS}
m = \bigO \left( \left(\rho^2 k + n\right) \log(N/k) \right),
\end{equation}
where $n = | \support^c \cap \suppest | + | \support \cap \suppest^c |$ represents the mismatch between the true and estimated supports.  This aligns with the sample complexity for sparse approximation with prior belief from \cite{mansour2014recovery}. If $\rho \rightarrow 0$, which necessitates $\rho \leq \frac{1}{\sqrt{2\log(N/s)}}$ implying $2\rho^2\log(N/s) \leq 1$, and as $n \rightarrow 0$, the sample complexity goes to $s$.  That is as the weights $w_2$ are increased relative to the weights $w_1$, and as the mismatch parameter $n\rightarrow 0$, then $m \rightarrow \bigO(s)$.

Moreover, since our results are for the noisy setting, in addition to recovering the results of \cite{mansour2014recovery}, we generalize those results which were shown only in the noiseless setting. 

\item {\bf Known support.}  In case $\x$ is supported on $\support = \suppest$ of size $| \support | = k = s$ estimated exactly, then if $\rho^2$ is sufficiently small such that $\rho^2 \log(N/k) \leq 1$,  the sample complexity reduces to 
$$
m = \bigO(k),
$$
recovering the known sample complexity of solving for a $k$-sparse signal with known support from Gaussian measurements.
\end{itemize}

\begin{proof}[Proof of Theorem \ref{thm:main}]
Suppose without loss of generality that $\support^c = [N]$ and that the weights are chosen to be $\om_j = \max\{1,\sqrt{2\gm\log(j/s)}\}$ for each $j \in [N]$.   Then 
\begin{align}
\frac{e^{-h^2 \om_j^2/2}}{h^2 \om_j^2} \leq h^{-2} e^{-h^2 \om_j^2/2}  \leq  \left\{ \begin{array}{cc} h^{-2} e^{-h^2/2},  & ~j \leq s \cdot e^{1/(2\gm)}, \\
h^{-2} (\frac{j}{s})^{-\gm h^2}, & ~j > s\cdot e^{1/(2\gm)}
\end{array} \right.
\end{align}
Set $h = \sqrt{2/\gm}$.  It follows that
\begin{align}
\sum_{j \in \support^c}\frac{e^{-h^2 \om_j^2/2}}{h^2 \om_j^2} \leq \sum_{j = 1}^N \frac{e^{-h^2 \om_j^2/2}}{h^2 \om_j^2} & \leq \frac{\gm}{2} \left( s e^{-1/(2\gm)} + \sum_{j=e^{1/(2\gm)}}^N \left(\frac{j}{s}\right)^{-2} \right) \nonumber\\
& \leq \frac{3\gm s}{2} e^{-1/(2\gm)}  \leq \frac{3\gm s}{2e}  \,.
\end{align}
The result is then arrived at by substituting this value of $h$ and applying Theorem \ref{lem:nuscgaussrws}. This completes the proof.
\end{proof}

\section{Numerical simulations} \label{sec:numerics}

\subsection{Weighted sparse recovery}
To perform weighted sparse recovery we generate signals from a distribution we refer to as the {\em weighted sparse signal model}.  Signals generated from this distribution are weighted $s$-sparse in expectation. To be precise, the probability for an index to be in the support of the signal is proportional to the reciprocal of the square of the weights assigned to that index. In particular, let's consider weighted $s$-sparse signals with support $\support$, i.e. $\om(\support)\leq s$, we draw indices of the signal with probability of being nonzero as 
\begin{equation}
\label{eqn:signalprob}
\hbox{Prob}\left(x_j \neq 0\right) = {s}/{N}\cdot {1}/{\om_j^2}\,,
\end{equation} 
where $s$ and $N$ are the weighted sparsity and the ambient dimension of the signal, $\x\in\RR^N$ respectively. The weights $\om_j$ will then be used in the weighted $\ell_1$-minimization \eqref{eqn:wl1min} recovery of $\x$. It is not hard to show that the weighted sparsity in expectation is $s$ as shown below.
\begin{equation}
\label{eqn:wsexpectation}
\E\left[\om(\support)\right] = \sum_{j=1}^N \hbox{Prob}\left(x_j \neq 0\right) \cdot \mathbb{I}_{\{x_j \neq 0\}} \cdot \om_j^2 =  \sum_{j=1}^N {s}/{N} = s \,.
\end{equation} 

In the experiments presented below, we draw signals of dimension $N=500$ from the above model, where the nonzero values are randomly generated as scaled Gaussian random variables without any normalization, and draw $m\times N$ sensing matrices with entries i.i.d. from a normal Gaussian. Noisy measurements are drawn with Gaussian noise $\|\noise\|_2 =  10^{-6}$. Then we run simulations with 50 realizations for each problem instance $(s,m,N)$. 

Here we set out to compare sample complexities of weighted sparse recovery using weighted $\ell_1$-minimization and standard sparse recovery using unweighted $\ell_1$-minimization for signals generated from the weighted sparse signal model.
We vary the number of measurements $m$ such that $~m/N\in[0.05,0.5]$; and fix weighted sparsity of the supp($\x$), $\support$, such that $\om(\support)/m = s/m \in \left[\frac{1}{0.05N},2.5\right]$. Then we record $k$ as the largest $|\support|$ for a given $s$. We considered two regimes of weights: $1)$ polynomially growing weights, i.e. $\om_j = j^\theta$ for $\theta>0$ and $j=1,\ldots,N$ (or more generally $\om_j = \pi(j)^\theta$ for a permutation $\pi$); and $2)$ uniformly random generated weights. In both regimes we consider a reconstruction successful if the recovery error in the $\ell_2$-norm is below $10^{-5}$ and a failure otherwise. Then we compute the {\em empirical probabilities} as the ratio of the number of successful reconstructions to the number of realizations. We present below sample complexity comparisons in these two regimes of weights using the phase transition framework in the phase space of $(s/m,m/N)$. Note that in all the figures we standardized the values of $s/m$ in such a way that the standardized $s/m$ is between $0$ and $1$ for fair comparison.

\vspace{-0.1cm}
\begin{itemize}
\item {\bf Polynomially growing weights.}
Precisely, we assign weights that grow polynomially proportional to $\om_j = j^{1/5}$. 
Fig. \ref{fig:ptsurfP} shows surface plots of empirical probabilities for signals drawn from the distribution of weighted sparse signals using polynomially growing weights. The color bars depict the probabilities of recovering ``close enough'' approximation $\sol$ to the true $\x$. By having larger area of yellow (i.e. with probability one), weighted sparse recovery using \eqref{eqn:wl1min}, in the left panel, outperforms generic sparse recovery using \eqref{eqn:l1min}, in the right panel. 
\vspace{-2mm}\begin{figure}[h!]
\centering
\includegraphics[width=0.43\textwidth,height=0.31\textwidth]{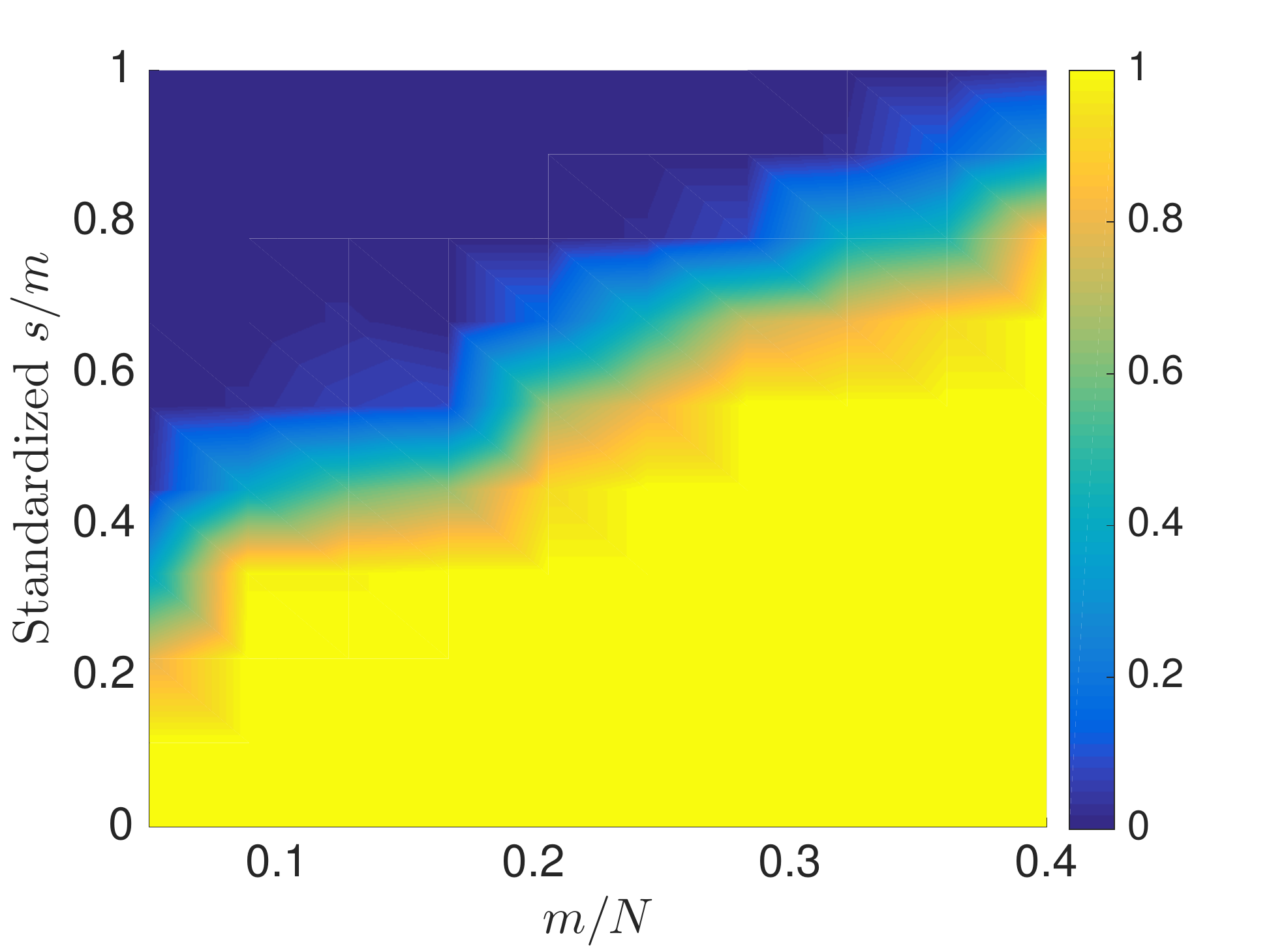} 
\includegraphics[width=0.43\textwidth,height=0.31\textwidth]{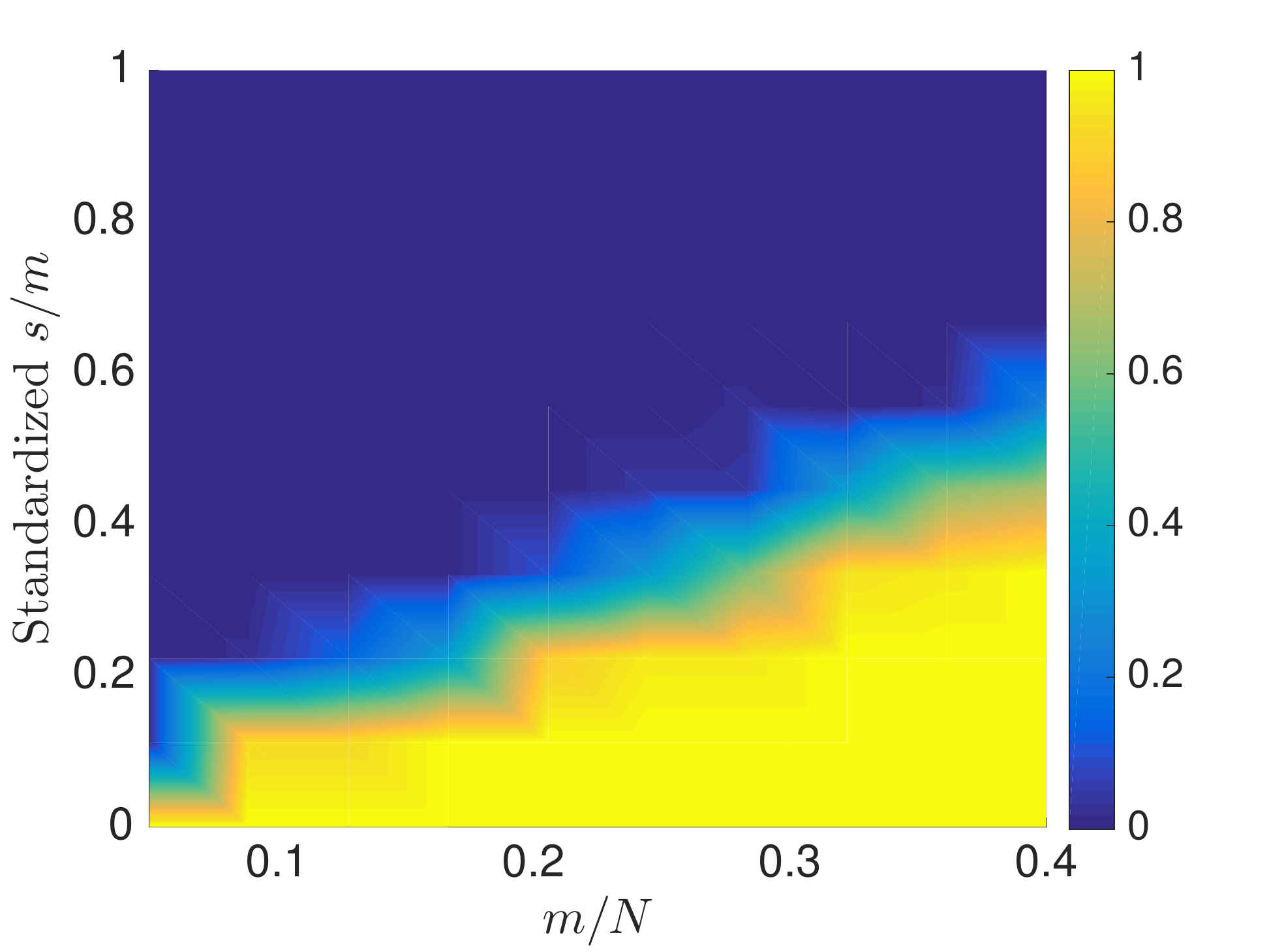} \vspace{-2mm}
\caption{\small Surface plots depicting phase transitions {\em Left panel}:   Weighted $\ell_1$-minimization. {\em Right panel}: unweighted $\ell_1$-minimization.}
\label{fig:ptsurfP}\vspace{-2mm}
\end{figure}
Note that weighted $\ell_1$-minimization has a seemingly high phase transition. This is because the generic sparsities are small for the pre-specified weighted sparsities, which depends on the magnitude of the weights. This can be seen in the left panel of Fig. \ref{fig:k_vs_s_PR} where the largest $k/m$ is roughly $0.63$. Furthermore, we computed the average empirical weighted sparsity in our simulations and as can be seen in the left panel of Fig. \ref{fig:k_vs_s_PR}, these average weighted sparsities are very close to the pre-specified weighted sparsities. Thus confirming our claim that the signals we generate are weighted sparse in expectation.

\begin{figure}[h!]
\centering
\includegraphics[width=0.43\textwidth,height=0.31\textwidth]{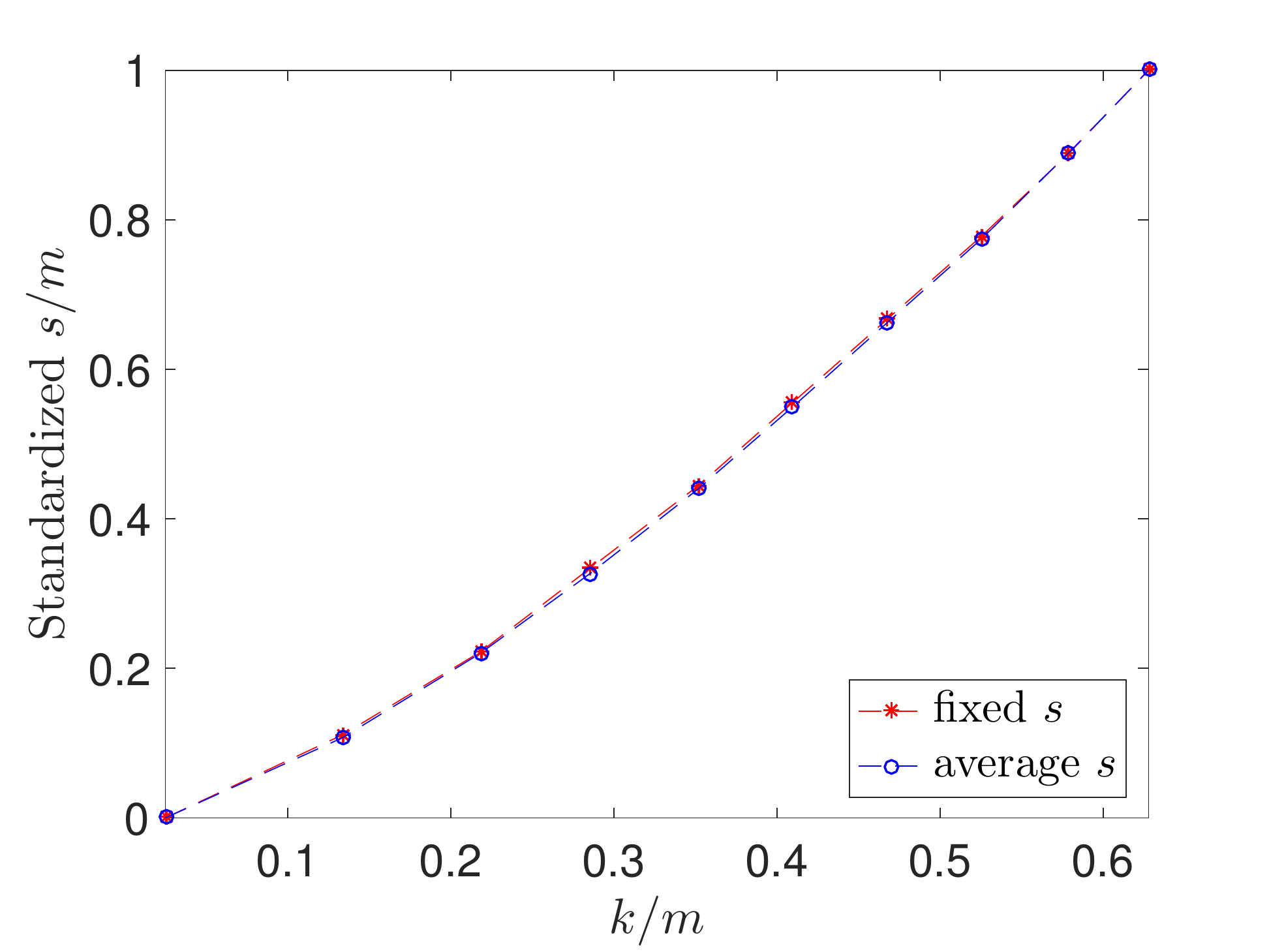} 
\includegraphics[width=0.43\textwidth,height=0.31\textwidth]{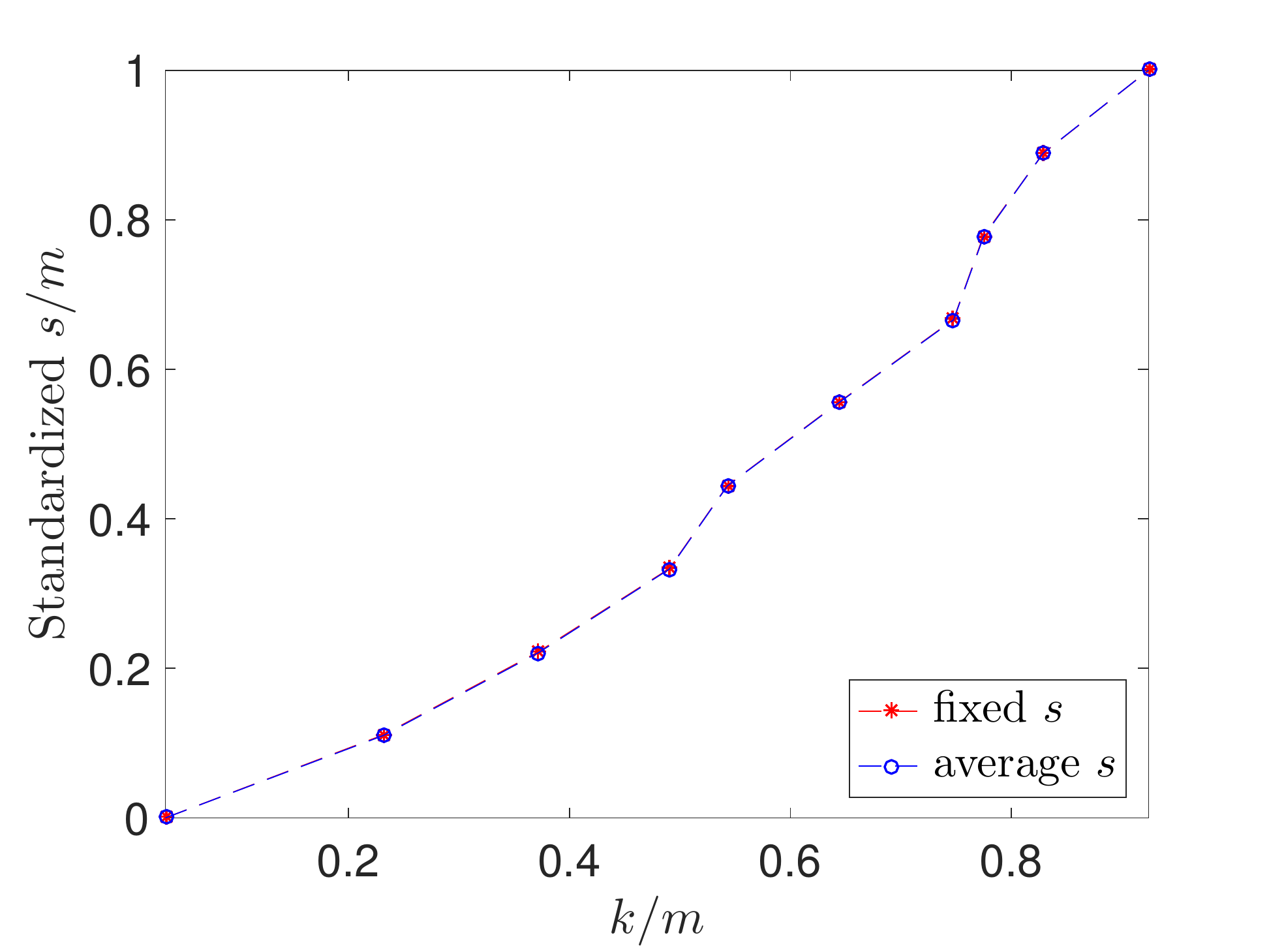} 
\caption{\small Sparsity ratios $k/m$ versus standardized weighted sparsity ratios $s/m$; standardized such that $s/m\in[0,1]$. {\em Left panel}: Polynomially growing weights. {\em Right panel}: Random weights.}
\label{fig:k_vs_s_PR}
\end{figure}

\item {\bf Random weights.}
Precisely, we assign weights that are uniformly randomly generated and normalized such that $\min_j \om_j = 1$. 
Similarly, Fig. \ref{fig:ptsurfR} shows surface plots of empirical probabilities for signals drawn from the distribution of weighted sparse signals with these random weights. Again, by having larger area of yellow, weighted sparse recovery using \eqref{eqn:wl1min}, in the left panel, outperforms generic sparse recovery using \eqref{eqn:l1min}, in the middle panel. 
\begin{figure}[h!]
\centering
\includegraphics[width=0.43\textwidth,height=0.31\textwidth]{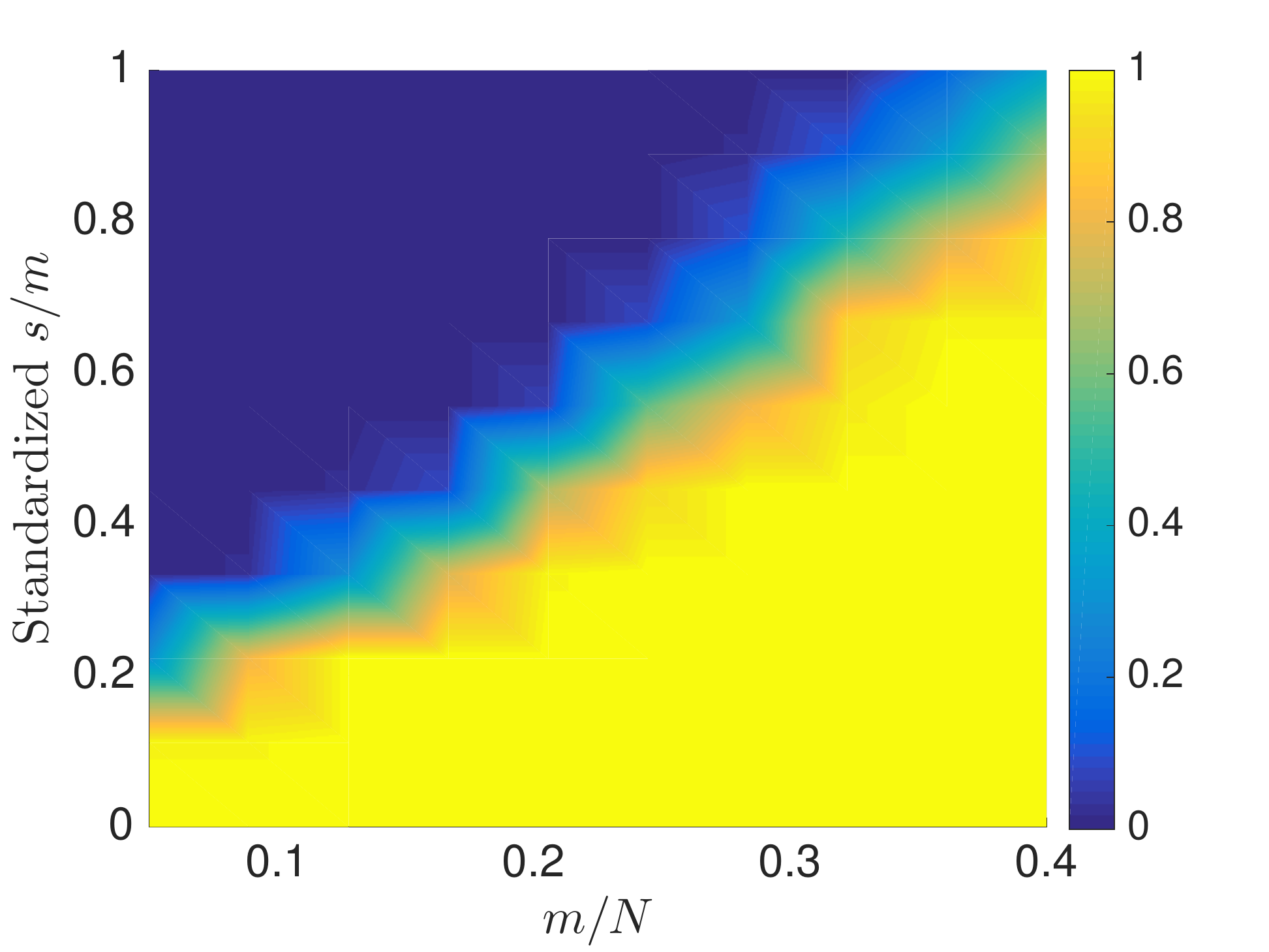} 
\includegraphics[width=0.43\textwidth,height=0.31\textwidth]{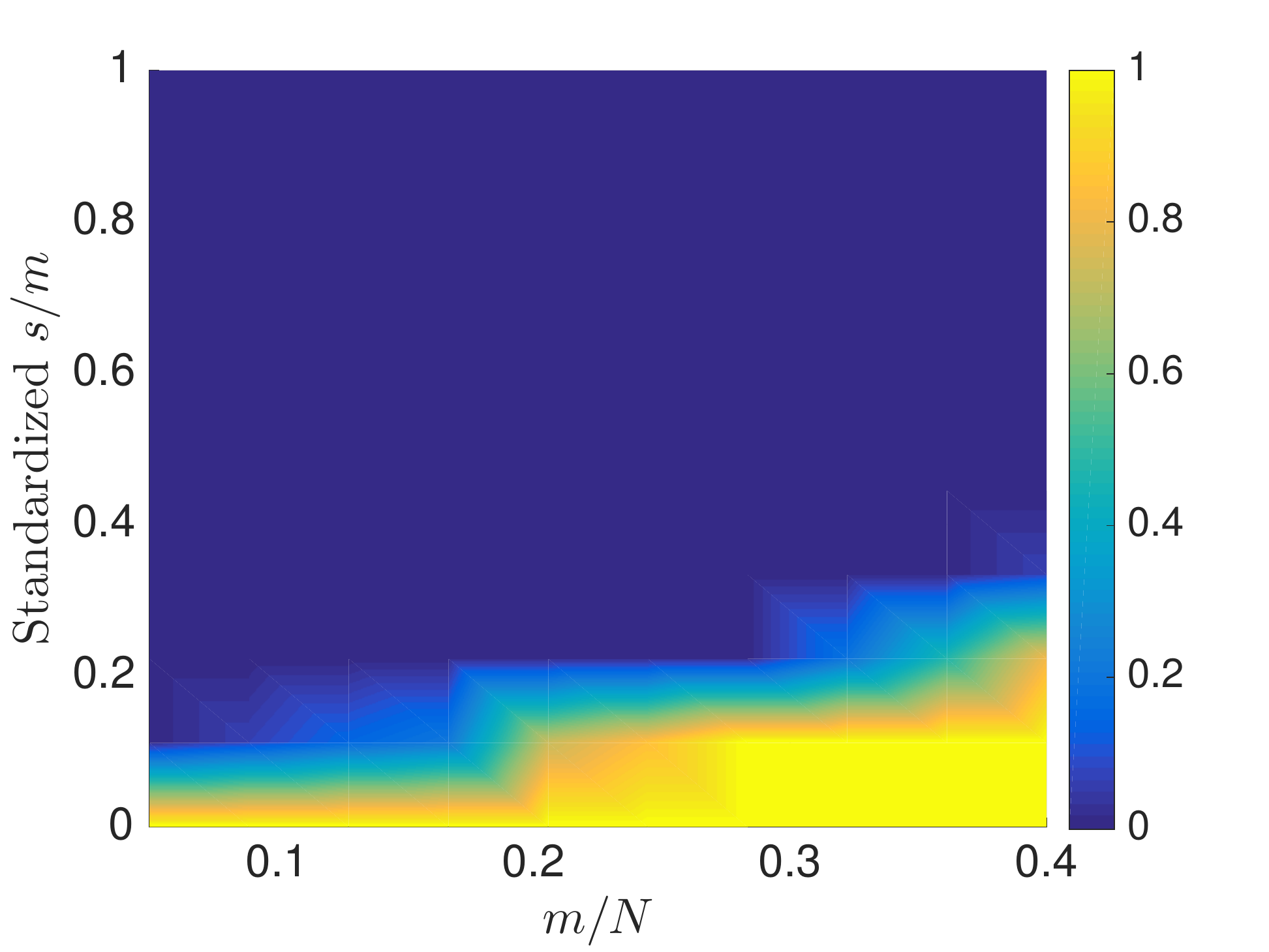} 
\caption{\small Surface plots depicting phase transitions {\em Left panel}:   Weighted $\ell_1$-minimization. {\em Right panel}: unweighted $\ell_1$-minimization.}
\label{fig:ptsurfR}
\end{figure}
\end{itemize}
Note that unweighted $\ell_1$-minimization has a relatively lower phase transition curve for $\ell_1$-minimization with random weights (of relatively smaller weights) than the polynomially growing weights in the right panel of Fig. \ref{fig:ptsurfP}. This is because the generic sparsities are larger for the pre-specified weighted sparsities, see the right panel of Fig. \ref{fig:k_vs_s_PR}. The average weighted sparsities are very close to the pre-specified weighted sparsities, see again the right panel of Fig. \ref{fig:k_vs_s_PR}. Thus again confirming our claim that the signals we generate are weighted sparse in expectation.

\subsection{Sparse recovery with prior support estimates}

In this second set of experiments we show how incorporating additional information about the support of the signal affects sparse recovery using (weighted) $\ell_1$-minimization. The setup is that for a support $\support$ we know an estimate of the support we denote as $\suppest$. We considered support estimates $\suppest$ of different degrees of overlap with the support $\support$. Technically, the overlap is parameterized by $0\leq \alpha,\beta \leq 1$ such that $|\suppest| = \b|\support|$, $|\mathcal{S}\cap\widetilde{\mathcal{S}}| = \alpha |\widetilde{\mathcal{S}}| = \alpha \beta|\mathcal{S}|$; but for simplicity in these experiments we fix $\b = 1$ and vary $\a$ with $\a = \{0,1/3,2/3,1\}$. Recall that $\alpha=0$ means that the estimated support is not aligned with the true support, while $\alpha=1$ indicates perfect alignment.

We assign weights such that $\om_j = 1$ for $j\in\suppest$ and $\om_j = w_2 = 1/w \geq 1$ for $j\in\suppest^c$ with $w = 1-\alpha$ as was prescribed in \cite{mansour2014recovery} as an {\em optimal} choice of weights. The problem dimensions are the same as in the previous section except that we pre-specify $s/m \in \left[\lceil\frac{1}{0.05N}\rceil,\alpha\beta + w_2^2(1-\alpha\beta)\right]$. By the definition of the weighted sparsity $s$ we can compute the generic sparsity $k = \lceil \frac{s}{\alpha\beta + w_2^2(1-\alpha\beta)}\rceil$.

\begin{itemize}
\item {\bf Phase transitions.}
Our theoretical results show that using good weights in the weighted $\ell_1$-minimization reconstruction, we can recover sparse signals on support sets aligned with the weights using fewer measurements $m$. Fig. \ref{fig:ptcontours} shows contour plots of empirical probabilities (the ratio of the number of reconstructions with errors below a given tolerance to the number of realizations) of 50\% and 95\% respectively. The curves depict the sampling rates that give 50\% or more (respectively 95\% or more) chance of getting a ``close enough'' approximation $\sol$ to the true $\x$. 
\begin{figure}[h!]
\centering
\includegraphics[width=0.43\textwidth,height=0.31\textwidth]{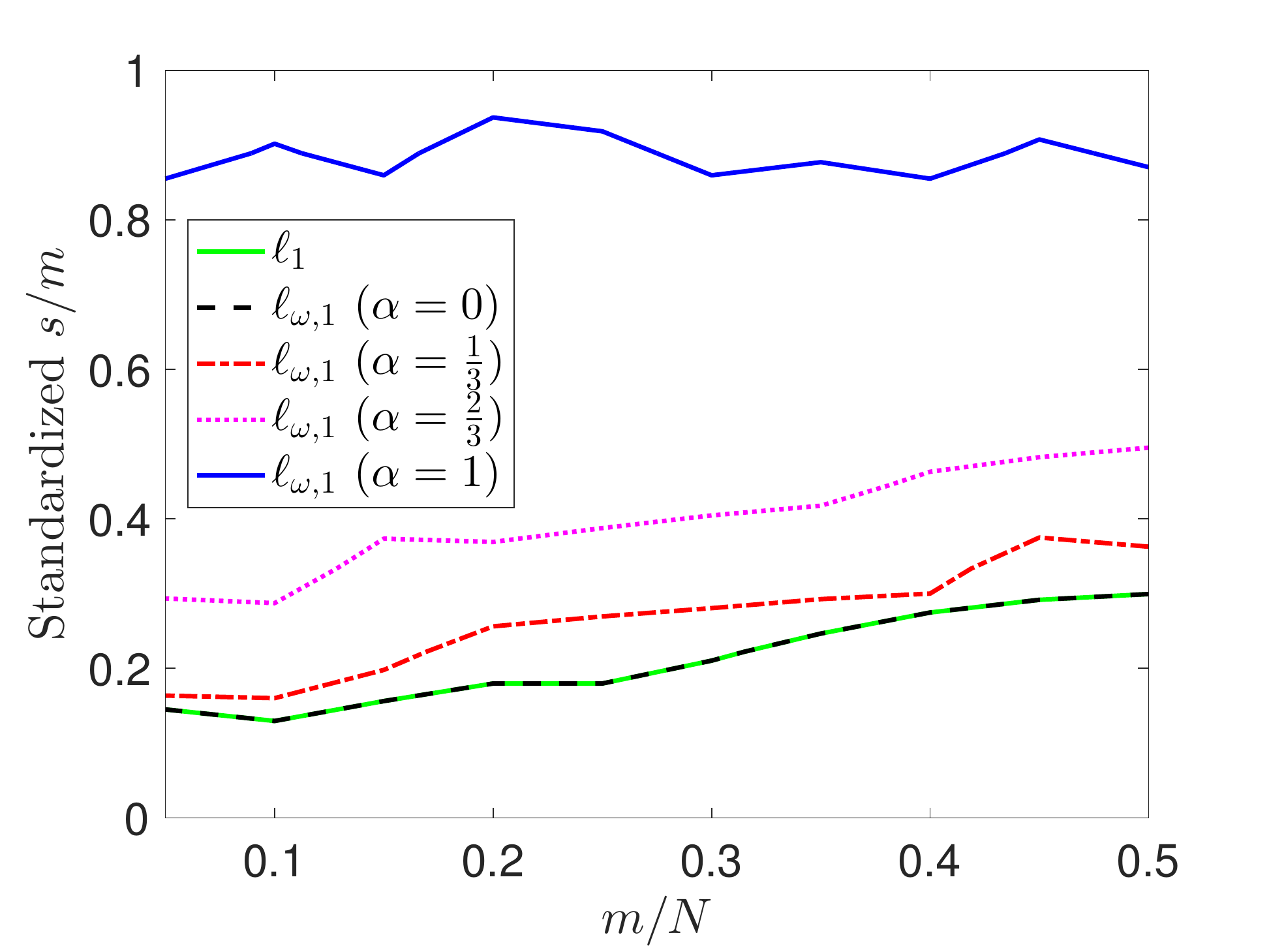}
\includegraphics[width=0.43\textwidth,height=0.31\textwidth]{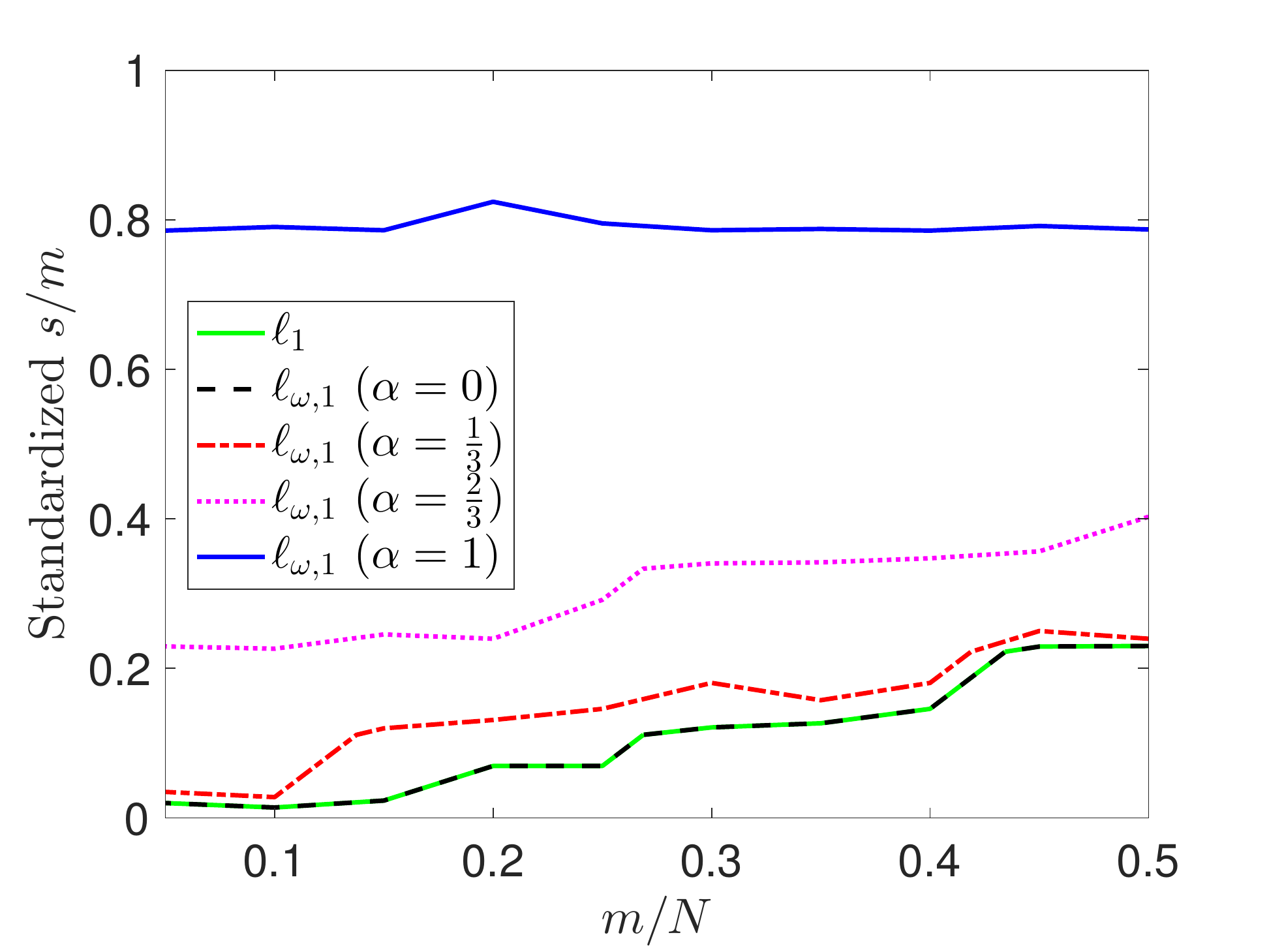} 
\caption{\small Contour plots of phase transitions. {\em Left panel}: 50\% contour curves. {\em Right panel}: 95\% contour lines.}
\label{fig:ptcontours}
\end{figure}

These results show that weighted sparse recovery using \eqref{eqn:wl1min}, outperforms generic sparse recovery using \eqref{eqn:l1min}, if the support estimate is significantly accurate. When $\alpha = 1$ we almost have perfect recovery using weighted $\ell_1$ and even when $\alpha = 2/3$ weighted $\ell_1$ does significantly better than unweighted $\ell_1$. This confirms the results of \cite{mansour2014recovery} in the noiseless setting, which guarantees improvement when $\alpha \geq 50\%$. We see not much improvement of weighted $\ell_1$ over unweighted $\ell_1$ when $\alpha = 1/3$ and when $\alpha = 0$ we get no improvement of weighted $\ell_1$ over unweighted $\ell_1$ as one would expected. 

Furthermore, the left panel of Fig. \ref{fig:kvss_prob} shows how the weighted sparsity varies with generic sparsity. In particular, we see an increase in the weighted sparsity as $\alpha$ increases from $0$ to $1/3$ to $2/3$ but due to the optimal choice of the weights of $w_2^{-1} = 1-\alpha$, $\alpha = 0$ and $\alpha = 1$ have the same weighted sparsity versus generic sparsity profile. The two curves coincide thats why only three curves appeared in the right panel plot. Alternatively, the result depicted in Fig. \ref{fig:ptcontours} can be zoomed into via a slice through the phase transition curves for a fixed $m$, as shown in the right panel of Fig. \ref{fig:kvss_prob} for $~m = N/4$.
\begin{figure}[h!]
\centering
\includegraphics[width=0.43\textwidth,height=0.31\textwidth]{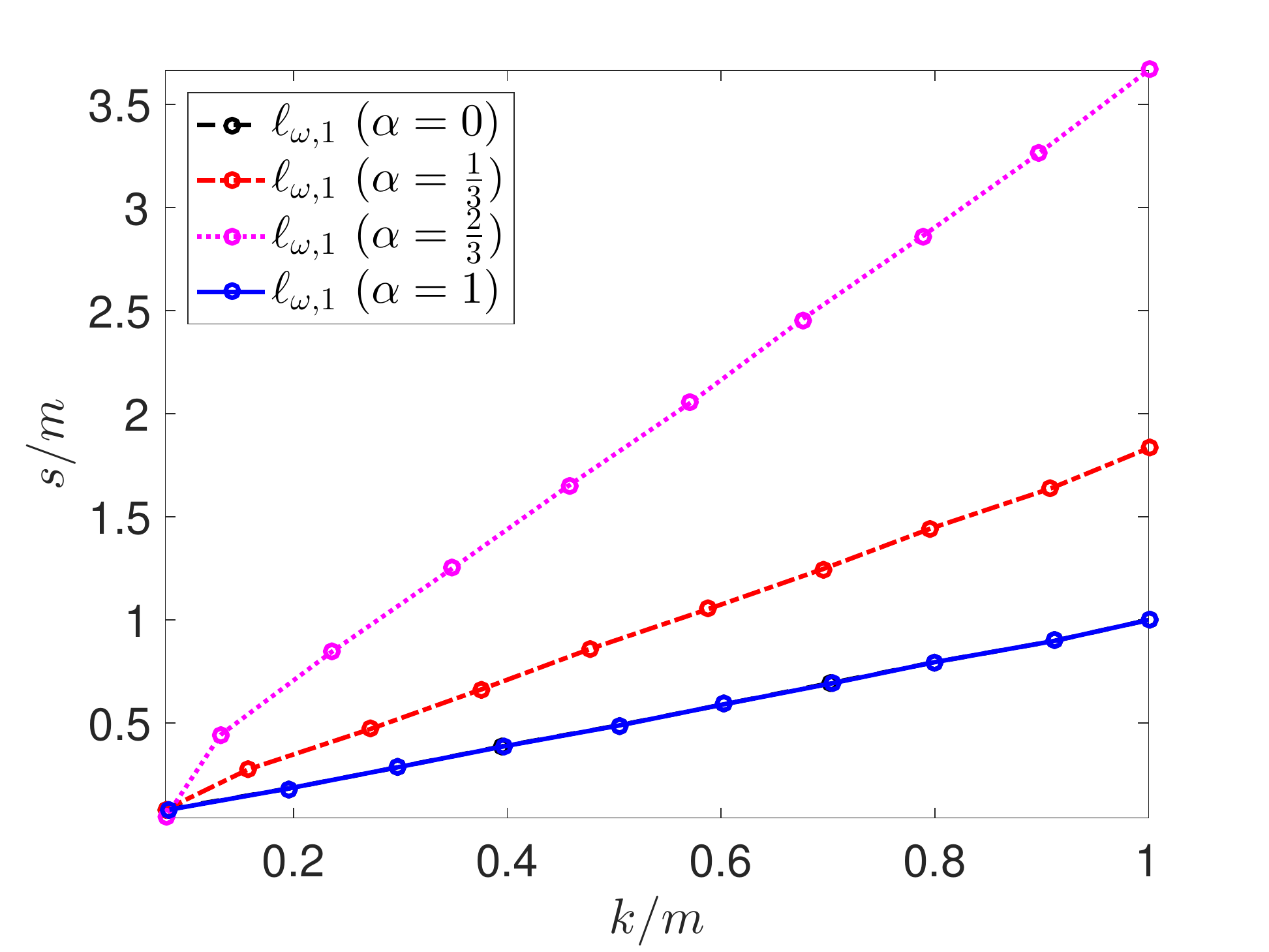} 
\includegraphics[width=0.43\textwidth,height=0.31\textwidth]{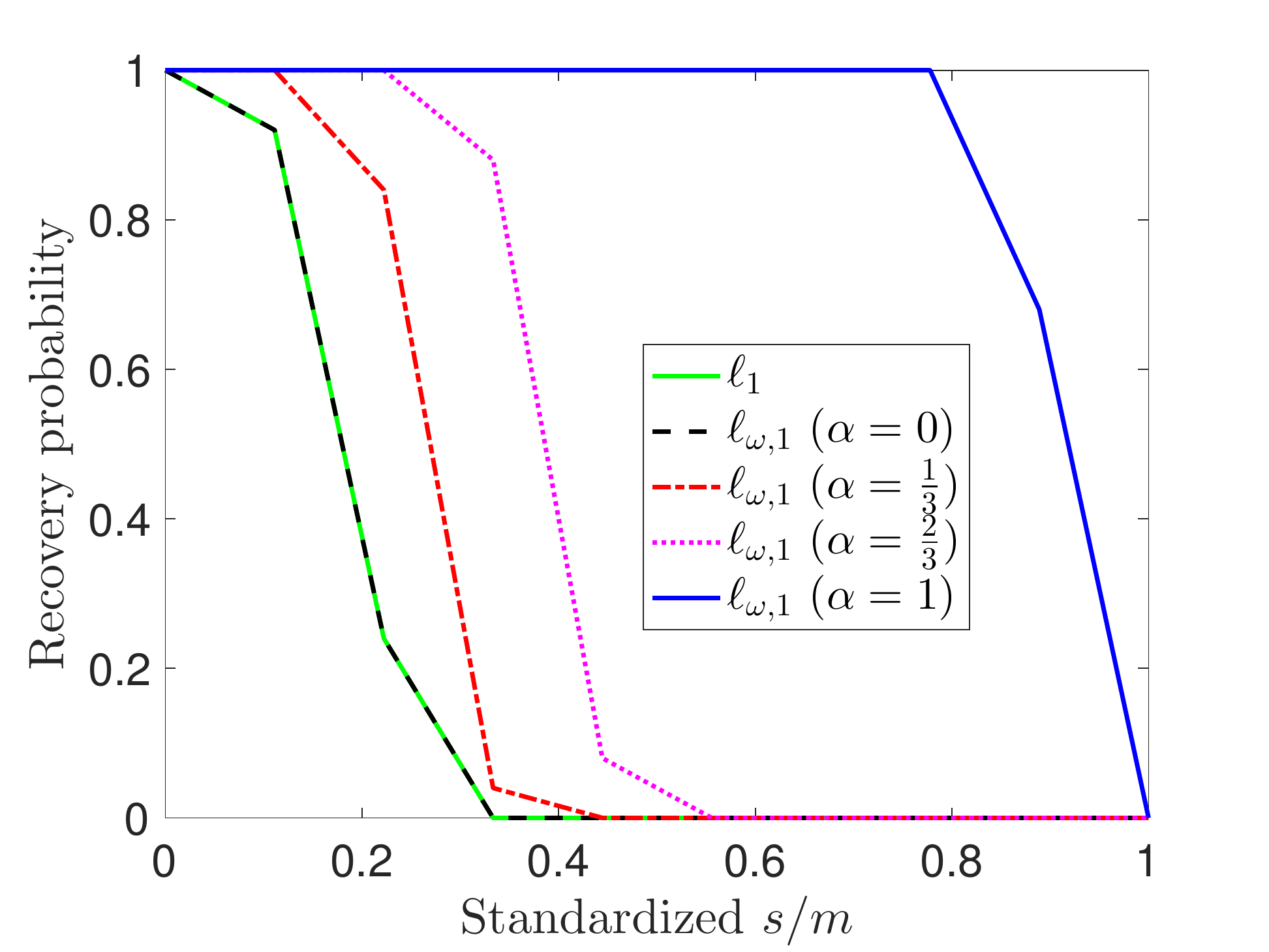} 
\caption{\small {\em Left panel}: Sparsity ratios $k/m$ versus weighted sparsity ratios $s/m$ {\em Right panel}: Recovery probabilities as a function of the weighted sparsity for a fixed $~m=N/4$.}
\label{fig:kvss_prob}
\end{figure}

\item {\bf Recovery errors.}
We also compare recovery errors of the two approaches mentioned above in the two weight setting. We compute the errors for a fixed number of measurements $m = N/4$ and pre-specified $s/m \in \left[\lceil\frac{1}{0.05N}\rceil,\alpha\beta + w_2^2(1-\alpha\beta)\right]$. We denote the recovered signal as $\sol_{alg}$, where {\em alg} is either $\ell_{\om 1}$ or $\ell_1$,  referring to the recovery algorithm used, i.e.  \eqref{eqn:wl1min} or \eqref{eqn:l1min} respectively. We present relative errors for varying $\a \in \{0, 1/3, 2/3, 1\}$.
\begin{figure}[h!]
\centering
\includegraphics[width=0.43\textwidth,height=0.31\textwidth]{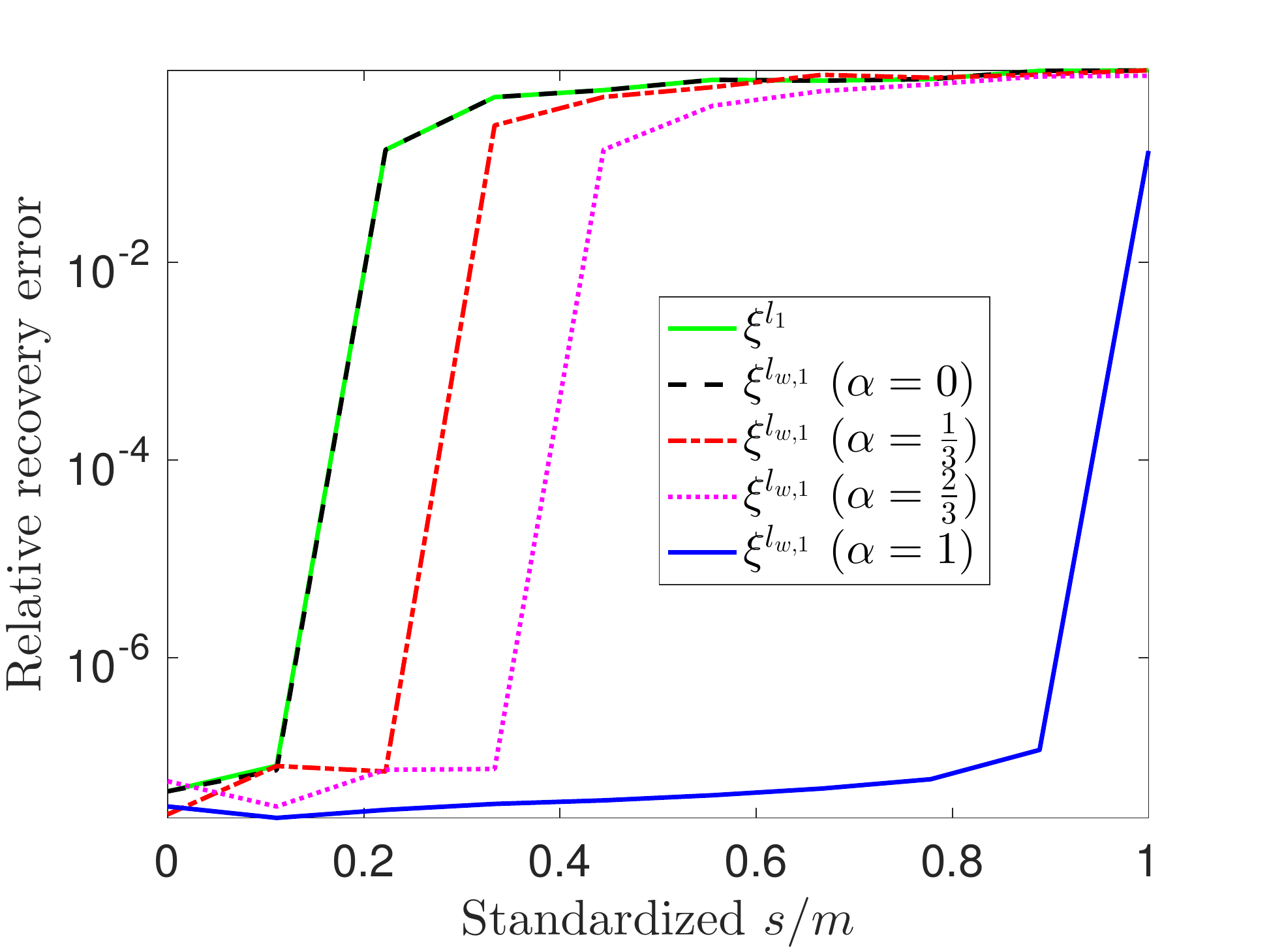} 
\includegraphics[width=0.43\textwidth,height=0.31\textwidth]{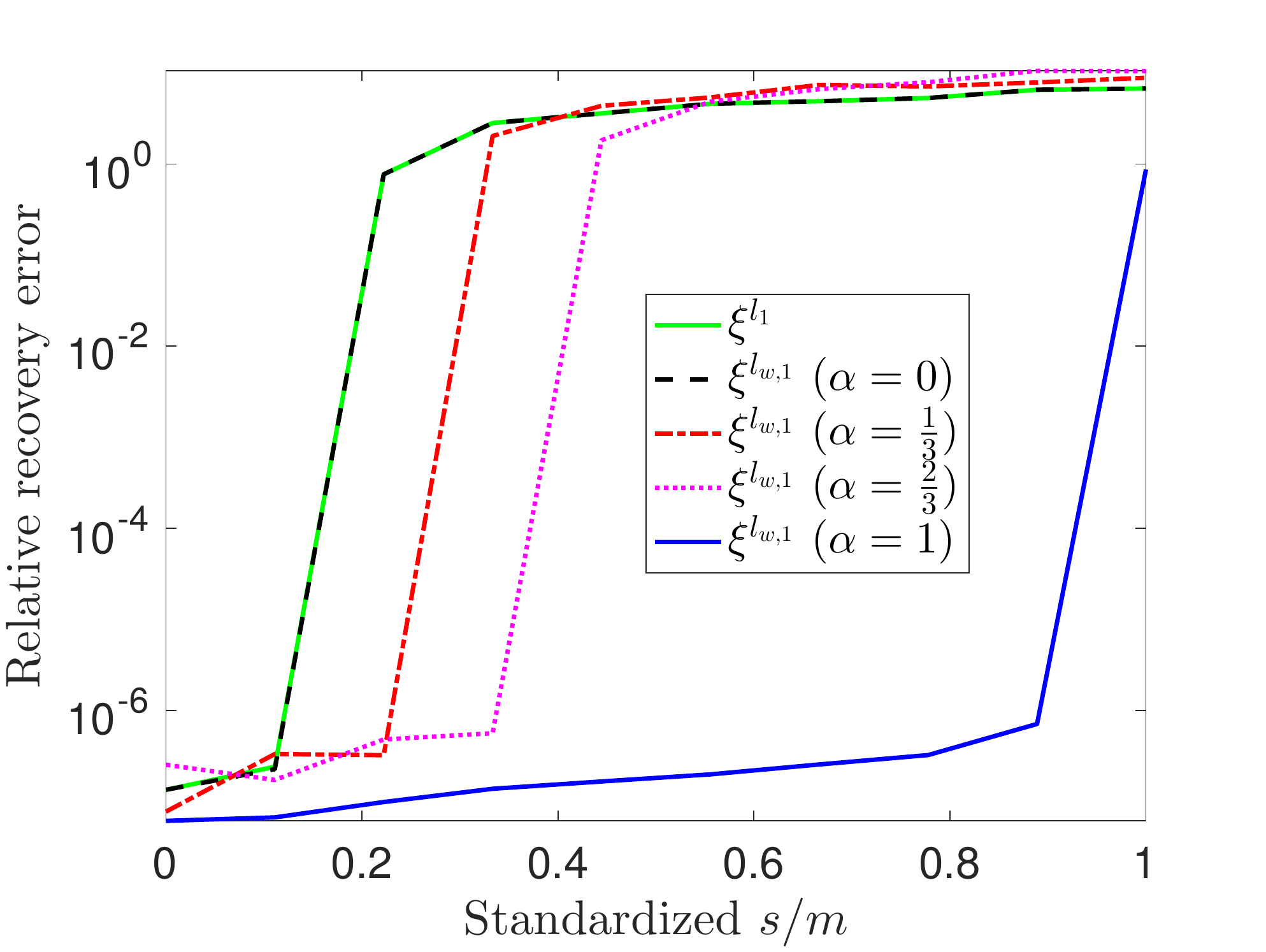} 
\caption{\small Relative reconstruction errors. {\em Left panel}: $\ell_{2}$-norm  {\em Right panel}: $\ell_{\om,1}$-norm.} 
\label{fig:rerrors}
\end{figure}
\end{itemize}
The left panel of Fig. \ref{fig:rerrors} shows the $\ell_{2}$-norm relative errors, i.e. $\xi^{alg} := \frac{\|\sol_{alg} - \x\|_2}{\|\x\|_2}$;  
while the right panel of Fig. \ref{fig:rerrors} shows the $\ell_{\om,1}$-norm relative errors, i.e. $\xi^{alg} := \frac{\|\sol_{alg} - \x\|_{\om,1}}{\|\x\|_2}$.  As implied by the results discussed above, Fig. \ref{fig:rerrors} confirms that sparse recovery using \eqref{eqn:wl1min} get smaller $\ell_2$ (respectively $\ell_{\om,1}$) errors than generic sparse recovery using \eqref{eqn:l1min} as $\alpha$ increases.

\section{Conclusion} \label{sec:concln}
This manuscript unifies, strengthens, and extends previous weighted $\ell_1$ recovery bounds, using the tools of weighted sparsity and weighted null space property.  In particular, we provided a new general sample complexity bound for weighted sparse recovery with Gaussian matrices using weighted $\ell_1$-minimization, by deriving a bound on the Gaussian mean width for nonuniform robust weighted sparse recovery. 

Special cases of our results include the case of polynomially-growing weights and the case of prior information in the form of two weights, whereby we recover the results of \cite{mansour2014recovery}.  Our main sample complexity result unifies and extends these special cases to the general setting of an arbitrary weight distribution $\om_j \geq 1$ by framing the result in terms of weighted sparse recovery bounds, and moreover generalizes these bounds to the noisy setting.  

Several questions remain open for future work.  On the one hand, our results are for Gaussian measurement matrices, and it would be interesting to explore to what extent these results hold for more structured random measurement ensembles.    Finally, it is interesting to see whether our theory can be used to derive theoretical guarantees for re-weighted $\ell_1$-minimization, which is an iterative application of weighted $\ell_1$-minimization where weights are learned in conjunction with the underlying sparse solution.

\section*{Acknowledgements} 
The work was funded, in part, by an NSF CAREER Grant and AFOSR Young Investigator Award.
The authors would like to thank Soledad Villar, for reading the paper and giving valuable feedback that improved the manuscript, and Anastasios Kyrillidis, for providing feedback on the experiments. We would also like to thank the anonymous referees.

\bibliographystyle{myalpha}
\bibliography{bbtex}

\end{document}